\documentclass[11pt,reqno]{amsart}
\usepackage{enumerate}
\usepackage{enumitem}
\usepackage{mathrsfs}
\usepackage{url}
\usepackage{cite}
\usepackage{amsmath}
\usepackage{comment}
\usepackage{graphicx}
\usepackage{upgreek}
\usepackage{tikz}
\usepackage{amsfonts,amssymb}
\usetikzlibrary{arrows,shapes,trees,backgrounds}
\usepackage{fancyhdr}
\usepackage{amsfonts, amsmath, amssymb, amscd, amsthm, bm, cancel}
\usepackage[numbers,sort&compress]{natbib}

\usepackage[
linktocpage=true,colorlinks,citecolor=magenta,linkcolor=blue,urlcolor=magenta]{hyperref}
\usepackage[margin=1.1in]{geometry}

\newtheorem{proposition}{Proposition}[section]
\newtheorem{theorem}[proposition]{Theorem}
\newtheorem{lemma}[proposition]{Lemma}
\newtheorem{corollary}[proposition]{Corollary}

\theoremstyle{definition}
\newtheorem*{ack}{Acknowledgements}
\theoremstyle{remark}

\numberwithin{equation}{section}
\setcounter{tocdepth}{1}
\allowdisplaybreaks

\begin{document}

\title[Gauss curvature type flow and Alexandrov-Fenchel inequalities]{Gauss curvature type flow and Alexandrov-Fenchel inequalities in the hyperbolic space}

\author[T. Luo]{Tianci Luo}
\address{School of Mathematical Sciences, University of Science and Technology of China, Hefei 230026, P.R. China}
\email{\href{mailto:Luo_tianci@mail.ustc.edu.cn}{Luo\_tianci@mail.ustc.edu.cn}}
\author[R. Zhou]{Rong Zhou}
\email{\href{mailto:zhourong@mail.ustc.edu.cn}{zhourong@mail.ustc.edu.cn}}
\date{\today}
\subjclass[2020]{53C42; 53E10}
\keywords{Gauss curvature flow, quermassintegrals, hyperbolic space}

\begin{abstract}
We consider the Gauss curvature type flow for uniformly convex hypersurfaces in the hyperbolic space $\mathbb{H}^{n+1}\ (n\geqslant 2)$. We prove that if the initial closed hypersurface is smooth and uniformly convex, then the smooth solution exists for all positive time and converges smoothly and exponentially to a geodesic sphere centered at the origin. The key step is to prove the upper bound of Gauss curvature and that uniform convexity preserves along the flow. As an application, we provide a new proof for an Alexandrov-Fenchel inequality comparing $(n-2)$th quermassintegral and volume of convex domains in $\mathbb{H}^{n+1}\ (n\geqslant 2)$.
\end{abstract}

\maketitle



\section{Introduction}
    In this paper, we consider a Gauss curvature type flow for uniformly convex hypersurfaces in the hyperbolic space $\mathbb{H}^{n+1}\ (n\geqslant 2)$ and we also provide a new proof for a quermassintegral inequality for convex domains.
    
    Let $n\geqslant 2$. We view $\mathbb{H}^{n+1}=\mathbb{S}^n\times [0,+\infty)$ as a warped product manifold equipped with metric $$\overline{g}_{\mathbb{H}^{n+1}}=\mathrm{d}\rho^2+\phi^2(\rho)g_{\mathbb{S}^n},$$ where $\phi(\rho)=\sinh\rho,\ \rho\in[0,+\infty)$. Define $V=\phi(\rho)\partial_{\rho}$. It's well known that $V$ is a conformal Killing field. For a hypersurface $M\subset\mathbb{H}^{n+1}$, let $\nu$ be its unit outward normal vector. We denote $u=\langle V,\nu \rangle$ be its support function. Recall that, $M$ is star-shaped, if its support function is positive everywhere on $M$; while $M$ is uniformly convex (resp. convex) if its principal curvatures are all positive (resp. non-negative).
    
	Consider a smooth family of embeddings $X:\mathbb{S}^{n
	}\times [0,T)\to\mathbb{H}^{n+1}$ satisfying
	\begin{equation}
		\label{1.1}
		\begin{cases}
			\partial_t X(x,t) = \left( \phi'(\rho)-uK^{\frac{1}{n}} \right)\nu(x,t), \\ X(\cdot,0)=X_{0}(\cdot),
		\end{cases}
	\end{equation}
    where $K(\cdot,t)$ denotes the Gauss curvature of $M_t=X(\mathbb{S}^n,t)$. The volume of the enclosed domain $\Omega_t$ is non-decreasing while the $(n-2)$th quermassintegral $\mathscr{A}_{n-2}(\Omega_t)$ (see Section \ref{subsec-quer} for precise definition) is non-increasing along the flow (\ref{1.1}). We will prove the following long time existence and convergence results for the flow (\ref{1.1}) with uniformly convex initial hypersurfaces.
    
	\begin{theorem}
		\label{thm1.1}
		Let $M_{0}=X_{0}(\mathbb{S}^{n})$ be a smooth, closed, uniformly convex hypersurface in hyperbolic space $\mathbb{H}^{n+1}$ 
		containing the origin, then the flow \eqref{1.1} has a unique smooth uniformly convex solution $M_{t}$ for all time $t\in[0,+\infty)$. When $t\rightarrow +\infty$, $M_{t}$ converges smoothly to a unique standard smooth geodesic sphere centered at the origin, and the convergence of $M_{t}$ in any $C^{k}$-norm is exponential.
	\end{theorem}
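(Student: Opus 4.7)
The plan is to follow the standard programme for fully nonlinear curvature flows: recast \eqref{1.1} as a scalar parabolic PDE via radial graphs, establish uniform a priori $C^k$ estimates for all $k$, push the solution to all times, and then extract smooth exponential convergence using the monotone functionals recorded in the introduction. Since $M_0$ is uniformly convex and contains the origin, it is star-shaped about $0$, so I would represent $M_t$ as a graph $\rho(\theta,t)$ over $\mathbb{S}^n$ on the maximal smooth existence interval $[0,T^*)$. The PDE for $\rho$ is uniformly parabolic as long as uniform convexity is preserved, so short-time existence is standard and the real task is to upgrade to uniform estimates.

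For the $C^0$ bound I would apply the maximum principle to $\rho$ itself. At an interior maximum of $\rho$, one has $\nu=\partial_\rho$ and $u=\sinh\rho_{\max}$; combined with $\mathrm{Hess}^M\rho\le 0$ and the warped-product identity $\overline{\mathrm{Hess}}\,\rho=\coth\rho\cdot\bar g$ on horizontal vectors this yields $\kappa_i\ge\coth\rho_{\max}$, hence $K\ge\coth^n\rho_{\max}$. Substituting into \eqref{1.1} gives
\[
\partial_t\rho_{\max}\le\phi'(\rho_{\max})-\sinh\rho_{\max}\cdot\coth\rho_{\max}=0,
\]
and symmetrically $\partial_t\rho_{\min}\ge 0$, so $M_t$ is trapped in a fixed annular region for all time.

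The analytic core is the two-sided bound on $K$. For the upper bound I would work with an auxiliary function such as $Q=K^{1/n}/(u-a)$ with $a<\inf u$, compute its evolution from the standard formulae for $\partial_t u$ and $\partial_t h^i_j$ under \eqref{1.1}, and apply the maximum principle; the favourable sign supplied by the driving term $\phi'$ and by the zero-order terms stemming from the conformal Killing field $V$ should close up a pointwise bound on $K^{1/n}$. For the preservation of uniform convexity I would examine the evolution of the smallest principal curvature, equivalently the largest eigenvalue of the inverse Weingarten operator $b^i_j=(h^{-1})^i_j$, and apply an Andrews-style tensor maximum principle, exploiting the concavity of $K^{1/n}$ in the principal curvatures and the sign of the ambient curvature terms coming from $\mathbb{H}^{n+1}$. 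This curvature step, flagged in the abstract as the key difficulty, is where I expect the main technical effort.

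With $K$ pinched between positive constants and the principal curvatures uniformly controlled, \eqref{1.1} is uniformly parabolic and concave in $(h^i_j)$, so Krylov-Safonov together with Evans-Krylov provide $C^{2,\alpha}$ bounds and parabolic Schauder bootstrapping delivers uniform $C^k$ bounds for every $k$; this forces $T^*=+\infty$. For convergence, both $|\Omega_t|$ and $\mathscr{A}_{n-2}(\Omega_t)$ are monotone and bounded, hence each converges, and the associated dissipation identity together with the smooth a priori bounds forces the speed to tend to zero and any subsequential smooth limit to be stationary, i.e.\ a geodesic sphere centered at the origin. To upgrade this to uniqueness of the limit and exponential convergence in each $C^k$-norm I would linearize \eqref{1.1} about the limit sphere: because $\phi'(\rho)$ is defined with respect to the fixed origin, translations are not zero modes of the linearized operator on $\mathbb{S}^n$, so its spectrum has a strictly positive gap, and a standard Lyapunov/interpolation argument then produces the desired exponential decay.
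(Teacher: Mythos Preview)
Your overall scheme matches the paper closely: the $C^0$ bound via the maximum principle on $\rho$, the test function $K^{1/n}/(u-a)$ for the upper bound on $K$, and the use of the monotone quermassintegrals for convergence all appear in the paper essentially as you describe.

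The one place where your sketch is too optimistic is the lower curvature bound. A bare tensor maximum principle on $(h^{-1})^i_j$ does not close here: the evolution of the largest principal radius $\lambda$ carries bad terms of order $\lambda$ (coming from $\phi' h_{11}$ and from $\nabla_1\nabla_1 u$) that neither the concavity of $K^{1/n}$ nor the hyperbolic reaction terms absorb on their own. The paper's device is to work instead with the auxiliary function $\tilde Q=\log\lambda - Au + B\rho$. The $B\rho$ part, via the identity $\nabla_i\nabla_j\rho=(\phi'/\phi)(g_{ij}-\nabla_i\rho\nabla_j\rho)-(u/\phi)h_{ij}$, generates a dominating good term of size $-BC_0\,\lambda$ at a spatial maximum, while the $-Au$ part is tuned to cancel the unfavourable contribution from $\partial_t\rho$; only after fixing $B$ and then $A$ in terms of the $C^0/C^1$ bounds does the inequality close. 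This two-parameter auxiliary construction is the technical core of the argument and is not anticipated by your proposal.

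For exponential convergence the paper takes a more hands-on route than linearization: it computes the evolution of $|\overline\nabla\gamma|^2$ along the scalar equation and applies the maximum principle directly, using the already-established smooth convergence to control the coefficients for large $t$ and extract a decay rate just below $(n-1)/(n\phi(\rho_\infty))$. Your spectral-gap argument is a legitimate alternative, but it is not what the paper does.
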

	
	Gauss curvature type flows in hyperbolic space have been concentrated widely in recent years, and they have many applications in geometric problems. Andrews and Chen \cite{Andrews2017CurvatureFI} considered Gauss curvature type flow in $\mathbb{H}^3$
	\begin{equation*}
		\partial_t X(x,t) = - (K-1)\nu(x,t).
	\end{equation*}
	They proved that if the initial hypersurface has positive scalar curvature, then the solution surface is uniformly convex and converges to a point in finite time. After rescalings, the solution is asymptotic to a shrinking sphere in finite time. Chen and Huang \cite{flowgauss} considered the flow by $\alpha$-th power of the Gauss curvature in $\mathbb{H}^{n+1}$
	\begin{equation*}
		\partial_t X(x,t)= -K^{\alpha}\nu(x,t).
	\end{equation*}
	They proved that if the initial hypersurface is uniformly convex, then the solution hypersurface exists and converges to a point in finite time for $\alpha>0$, and converges to a geodesic sphere after rescaling for $\alpha>\frac{1}{n+2}$. Their work generalized the corresponding results in Euclidean space $\mathbb{R}^{n+1}$, see \cite{Andrews1999GaussCF,Andrews2000MotionOH,Andrews2016FlowBP,MR3765656}. In \cite{WYZ2023}, Wei, Yang and Zhou considered a non-local type volume preserving Gauss curvature flow in $\mathbb{H}^{n+1}$
	\begin{equation*}
		\partial_t X(x,t) = (\phi(t)-K^{\alpha})\nu(x,t),
	\end{equation*}
    where $\alpha>0$ and $\phi(t)=\dfrac{1}{|M_t|}\int_{M_t} K^{\alpha} \mathrm{d}\mu_t$. They confirmed that for a smooth closed uniformly convex initial hypersurface, the solution exists for all time $t\in[0,+\infty)$ and converges smoothly and exponentially to a geodesic sphere as $t\to+\infty$. Li and Zhang \cite{Li2023AFA} studied the prescribed Gaussian curvature problem in $\mathbb{H}^{n+1}$ by using the Gauss curvature flow method.
	
	Our flow (\ref{1.1}) is a locally constrained curvature flow. Note that by the standard theory of parabolic equation, the flow \eqref{1.1} exists for a short time if initial  hypersurface is uniformly convex. The proof of long time existence and convergence require a priori estimates. $C^0$ estimate is obtained by parametrizing the flow as a graph of radial function on $\mathbb{S}^n$ and using standard maximum principle. By \cite[Theorem 2.7.10]{curvatureproblem}, for a convex solution to the flow (\ref{1.1}), once we obtain the $C^0$ estimate, the $C^1$ estimate follows, and we conclude that the star-shapedness preserves along the flow \eqref{1.1}. Therefore, the main difficulty for the long time existence is the $C^2$ estimate. Inspired by the method in \cite[Section 4]{Li2023AFA} and \cite[Lemma 4.2]{alexandrovproblem2020}, we will choose appropriate auxiliary functions to prove the upper bound of Gauss curvature $K$ and that uniform convexity preserves along the flow (\ref{1.1}), from which we obtain the $C^2$ estimate. Then by the standard procedure, we obtain the long time existence. Note that by the arithmetic mean and geometric mean inequality, the volume of the enclosed domain is monotonically non-decreasing along the flow \eqref{1.1}. This property helps us to verify that the limit hypersurface is a geodesic sphere centered at the origin and to prove the smooth convergence. The exponential convergence follows by estimating the gradient of radial function along the flow \eqref{1.1} inspired by \cite[Section 8.3]{Locallyconstrained}.
	
	Locally constrained curvature flows play an important role in proving Alexandrov-Fenchel type inequalities, which ask whether
	\begin{equation}
		\mathscr{A}_k(\Omega) \geqslant \xi_{k,l}\left(\mathscr{A}_\ell(\Omega)\right)\ \ (-1\leqslant \ell<k\leqslant n-1)  \label{equ-alexandrov}
	\end{equation}
    holds for a smooth bounded domain $\Omega$ in space forms. Here $\mathscr{A}_k(\Omega)$ are quermassintegrals for the smooth bounded domain $\Omega$, see Section \ref{subsec-quer} for precise definition. $\xi_{k,l}(s)$ is the unique function such that equality in (\ref{equ-alexandrov}) holds if and only if $\Omega$ is a geodesic ball. Equality in (\ref{equ-alexandrov}) holds if and only if $\Omega$ is a geodesic ball.
    
    In \cite{Guan2009TheQI}, Guan and Li used inverse curvature flows to prove quermassintegral inequalities \eqref{equ-alexandrov} for $k$-convex (i.e. $\sigma_j(\kappa)\geqslant 0,\ j=1,2,\cdots,k$) and star-shaped hypersurfaces in $\mathbb{R}^{n+1}$ when $-1\leqslant \ell < k \leqslant n-1$. In \cite{Guan2013AMC}, Guan and Li studied the mean curvature type flow
	\begin{equation}
		\partial_t X= (n\phi'(\rho)-uH)\nu \label{equ-guanli}
	\end{equation}
    in space forms, where $H$ is the mean curvature of the evolving hypersurfaces. They proved that if the initial closed hypersurface is star-shaped, then the solution of (\ref{equ-guanli}) exists for all positive time and converges smoothly and exponentially to a geodesic sphere centered at the origin. They also proved that the uniform convexity preserves along the flow (\ref{equ-guanli}) in $\mathbb{R}^{n+1}$. Note that the volume preserves while area decreases along the flow (\ref{equ-guanli}), the isoperimetric inequality for star-shaped hypersurfaces in $\mathbb{R}^{n+1}$ follows. Later, Chen-Guan-Li-Scheuer \cite{chenguanlischeuer2022} proved that the uniform convexity preserves along the flow (\ref{equ-guanli}) in $\mathbb{S}^{n+1}$. Chen and Sun \cite{CHEN2022108203} proved quermassintegral inequalities \eqref{equ-alexandrov} for convex hypersurfaces in $\mathbb{S}^{n+1}$ when $\ell=k-2,\ 1\leqslant k\leqslant n-1$.
    
    In $\mathbb{H}^{n+1}$, Li, Wei and Xiong \cite{LWX2014} proved quermassintegral inequalities \eqref{equ-alexandrov} when $\ell=-1,\ k=2$ for star-shaped and two-convex hypersurfaces. Wang and Xia \cite{Wang-Xia2014} proved quermassintegral inequalities \eqref{equ-alexandrov} when $-1\leqslant \ell<k\leqslant n-1$ for h-convex (i.e. $\kappa_i\geqslant 1,\ i=1,\cdots,n$) domains. Later, Hu-Li-Wei \cite{Locallyconstrained} gave a new proof of \eqref{equ-alexandrov} for h-convex domains. Andrews, Chen and Wei \cite{Ben2021} proved \eqref{equ-alexandrov} when $-1=\ell<k\leqslant n-1$ for domains whose boundary has positive sectional curvatures. Brendle, Guan and Li \cite{BGL2018,Li2021IsoperimetricTI} proved quermassintegral inequalities \eqref{equ-alexandrov} when $-1\leqslant \ell<k=n-1$ for convex domains and $\ell=0,\ k=1$ for star-shaped and mean convex domains. Hu and Li \cite[Theorem 1.3]{HU2023108826} proved \eqref{equ-alexandrov} when $-1\leqslant \ell<k=n-2$ for uniformly convex domains.
	
	As far as we know, quermassintegral inequalities \eqref{equ-alexandrov} for convex domains in $\mathbb{H}^{n+1}$ remain open in many cases. Note that $(n-2)$th quermassintegral is non-increasing while the volume is non-decreasing along our flow \eqref{1.1}. As an application, we will use the convergence results of flow (\ref{1.1}) to provide a new proof for the Alexandrov-Fenchel inequality comparing $(n-2)$th quermassintegral and the volume of convex domains in $\mathbb{H}^{n+1}$.
	
	\begin{theorem}
		\label{thm-afineq}
		Let $M$ be a closed, smooth, convex hypersurface in $\mathbb{H}^{n+1}\ (n\geqslant 2)$ enclosing a domain $\Omega$, then 
		\begin{equation}
			\mathscr{A}_{n-2}(\Omega)\geqslant \xi_{n-2,-1}\left(\mathscr{A}_{-1}(\Omega)\right),  \label{equ-afine}
		\end{equation} 
		where $\xi_{n-2,-1}=\xi_{n-2}\circ\xi_{-1}^{-1}$, $\xi_k(r)=\mathscr{A}_k(B_r)$ is a monotone function, $B_r$ is a geodesic ball of radius $r$ centered at the origin, $\xi_k^{-1}$ is the inverse function of $\xi_k$. Equality holds if and only if $M$ is a geodesic sphere.
	\end{theorem}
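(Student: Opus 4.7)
The plan is to apply the flow \eqref{1.1} to $M$ and exploit the two monotonicities already stated in the introduction: along \eqref{1.1}, the volume $\mathscr{A}_{-1}(\Omega_t)$ is non-decreasing while $\mathscr{A}_{n-2}(\Omega_t)$ is non-increasing, both with equality precisely when $M_t$ is totally umbilical. These monotonicities follow from the variational formulas for the quermassintegrals, the Minkowski identity $(n-k)\int_{M_t}\phi'(\rho)\sigma_k(\kappa)\,d\mu=(k+1)\int_{M_t}u\sigma_{k+1}(\kappa)\,d\mu$, and the Newton--Maclaurin inequalities applied to the speed $\phi'(\rho)-uK^{1/n}$; the exponent $1/n$ is tuned precisely so that both monotonicities hold simultaneously. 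Combined with the smooth convergence to a round geodesic sphere provided by Theorem \ref{thm1.1}, this should yield \eqref{equ-afine}.

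Suppose first that $M$ is smooth and uniformly convex, placing the origin inside the enclosed domain $\Omega$. Running \eqref{1.1} with initial datum $M_0=M$, Theorem \ref{thm1.1} produces a global smooth solution $M_t$ converging smoothly to a geodesic sphere $\partial B_{r_\infty}$ centered at the origin. Passing to the limit in the two monotonicities gives
\[
\mathscr{A}_{n-2}(\Omega)\geqslant \xi_{n-2}(r_\infty)\qquad\text{and}\qquad \mathscr{A}_{-1}(\Omega)\leqslant \xi_{-1}(r_\infty).
\]
Since $\xi_{-1}^{-1}$ and $\xi_{n-2}$ are both monotonically increasing, the second estimate yields $r_\infty\geqslant \xi_{-1}^{-1}(\mathscr{A}_{-1}(\Omega))$, and substituting back into the first produces
\[
\mathscr{A}_{n-2}(\Omega)\ \geqslant\ \xi_{n-2}(r_\infty)\ \geqslant\ \xi_{n-2}\bigl(\xi_{-1}^{-1}(\mathscr{A}_{-1}(\Omega))\bigr)\ =\ \xi_{n-2,-1}(\mathscr{A}_{-1}(\Omega)),
\]
which is \eqref{equ-afine}. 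For a general smooth convex (not necessarily uniformly convex) $M$, I would approximate $M$ by a sequence $M^{(\epsilon)}$ of smooth uniformly convex hypersurfaces in the $C^{\infty}$ topology (say via a short-time smoothing flow, or by a suitable perturbation of the support function), apply the uniformly convex case to each $M^{(\epsilon)}$, and let $\epsilon\to 0$ using continuity of $\mathscr{A}_{-1}$, $\mathscr{A}_{n-2}$, and $\xi_{n-2,-1}$ under such convergence.

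For the equality case with $M$ uniformly convex, the nonnegative function $F(t):=\mathscr{A}_{n-2}(\Omega_t)-\xi_{n-2,-1}(\mathscr{A}_{-1}(\Omega_t))$ is a difference of a non-increasing and a non-decreasing function of $t$, hence non-increasing; if $F(0)=0$, then $F\equiv 0$ on $[0,\infty)$, which forces $\mathscr{A}_{-1}(\Omega_t)$ to be constant in $t$, so the equality case of the Newton--Maclaurin step applies pointwise on $M_0=M$, rendering $M$ totally umbilical and hence a geodesic sphere. For merely convex $M$ with equality, the rigidity is transported from the uniformly convex case by the same approximation argument combined with the uniqueness of the extremizer among convex bodies of a given volume. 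The main obstacle I anticipate is the approximation step in the non-uniformly convex setting: the approximants $M^{(\epsilon)}$ must be regular enough to invoke Theorem \ref{thm1.1} yet remain close to $M$ in a topology strong enough both to pass the inequality to the limit and to recover rigidity at equality.
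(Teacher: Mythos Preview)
Your argument for the uniformly convex case, including its equality characterization, is essentially the paper's Step~1: the same two monotonicities (from the Minkowski identities at $k=0$ and $k=n-1$ combined with $K^{1/n}\leqslant H/n$ and $\sigma_{n-1}K^{1/n}\geqslant n\sigma_n$) plus Theorem~\ref{thm1.1}. Your approximation step for the inequality in the merely convex case is also in the paper's spirit, though the paper makes it concrete by projecting $\Omega$ to the Klein ball $B_1(O)\subset\mathbb{R}^{n+1}$, where convexity is preserved by \eqref{equ-second}, and then running Euclidean mean curvature flow briefly to obtain the uniformly convex approximants.

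The genuine gap is your treatment of the equality case when $M$ is convex but not a~priori uniformly convex. The sentence ``rigidity is transported \dots\ combined with the uniqueness of the extremizer among convex bodies of a given volume'' is circular: uniqueness of the extremizer is exactly what you are proving. Approximation alone cannot carry rigidity across the limit, because equality for $M$ does not force equality for the approximants $M^{(\epsilon)}$. The paper closes this gap with a variational argument. Set $M_+=\{p\in M:\kappa_i(p)>0\ \forall i\}$, which is open and nonempty (there is always an elliptic point). For any $\varphi\in C_c^\infty(M_+)$, the normal variation $\partial_t X=-\varphi\nu$ keeps $M_t$ convex for small $|t|$, so $Q(t):=\mathscr{A}_{n-2}(\Omega_t)-\xi_{n-2,-1}(\mathscr{A}_{-1}(\Omega_t))\geqslant 0$ with $Q(0)=0$. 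Hence $Q'(0)=0$, which yields
\[
(n-1)\sigma_{n-1}(\kappa)\ =\ \xi_{n-2,-1}'\bigl(\mathscr{A}_{-1}(\Omega)\bigr)\quad\text{on }M_+.
\]
Thus $M_+$ is the level set of a continuous function and is therefore closed; connectedness of $M$ gives $M_+=M$, so $M$ is actually uniformly convex, and your Step~1 rigidity applies.
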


	We will first prove the inequality \eqref{equ-afine} for uniformly convex hypersurfaces by using the convergence result Theorem \ref{thm1.1}. Then for a convex hypersurface, Theorem \ref{thm-afineq} follows by an approximation argument as in \cite{Guan2009TheQI} and \cite{WYZ2023}.
	
	The paper is organized as follows. In Section \ref{section2}, we will gather some preliminaries for our proof, including quermassintegrals, formulas for hypersurfaces in the hyperbolic space, the general evolution equations for curvature flows, and parametrization by radial graph. In Section \ref{section3}, we will prove $C^0$ and $C^1$ estimates. Especially, we conclude that star-shapedness preserves along the flow \eqref{1.1}. In Section \ref{section4}, we will prove the upper bound of Gauss curvature and the preservation of uniform convexity to establish the $C^2$ estimate. Then the long time existence follows by the standard procedure. In Section \ref{section5}, we will obtain the smooth and exponential convergence results and complete the proof of Theorem \ref{thm1.1}. In Section \ref{section6}, we will prove Theorem \ref{thm-afineq}, the quermassintegral inequality for convex domains.
	
	\begin{ack}
		The authors would like to thank Professor Yong Wei for his helpful discussions and support. This work is supported by National Key Research and
		Development Program of China 2021YFA1001800. 
	\end{ack}

\section{Preliminaries}
\label{section2}
    In this section, we will collect some preliminaries for our proof, including quermassintegrals, formulas for hypersurfaces in hyperbolic space, general evolution equations and parametrization by radial graph.
	
	\subsection{Hypersurfaces in hyperbolic space}
		We consider $\mathbb{H}^{n+1}=\mathbb{S}^n\times [0,\infty)$ as a warped product manifold equipped with metric
		\begin{equation}
			\overline{g}=\mathrm{d}\rho^2+\phi^2(\rho)g_{\mathbb{S}^n}, \nonumber
		\end{equation}
		where $\phi(\rho)=\sinh \rho$, $\rho\in[0,\infty)$, $g_{\mathbb{S}^n}$ denotes the standard metric on $\mathbb{S}^n$. Let
		\begin{equation}
			\Phi(\rho)=\int_0^\rho\phi(s)\mathrm{d}s=\cosh \rho-1.\nonumber
		\end{equation}
		It's well known that $V=D\Phi=\phi(\rho)\partial_{\rho}$  is a conformal Killing field on $\mathbb{H}^{n+1}$, that is
		\begin{equation}
			D V = \phi'(\rho)\overline{g}.\nonumber
		\end{equation}
	
		Let $M$ be a smooth closed hypersurface in $\mathbb{H}^{n+1}\ (n\geqslant 2)$ with induced metric $g$ and unit outward normal vector $\nu$. Let $\{x^1,x^2,\cdots,x^n\}$ be a local coordinate system on $M$, we denote $g_{ij}=g(\partial_i,\partial_j)$ and its second fundamental form $h_{ij}=\langle D_{\partial_i}\nu,\partial_j \rangle$. The Weingarten matrix is denoted by $\mathcal{W}=(h^i_j)$, where $h^i_j=g^{ik}h_{kj}$, and $(g^{ij})$ is the inverse matrix of $(g_{ij})$. The principal curvatures $\kappa=(\kappa_1,\kappa_2,\cdots,\kappa_n)$ are eigenvalues of the Weingarten matrix $\mathcal{W}$. The connection on $M$ is denoted by $\nabla$.
	
		The following formulas hold for smooth hypersurfaces in $\mathbb{H}^{n+1}$, see \cite[Lemmas 2.2 and 2.6]{Guan2013AMC}.
		\begin{lemma}\label{lem-hypersurface}
			Let $(M,g)$ be a smooth hypersurface in $\mathbb{H}^{n+1}$ with local frame $\{\partial_1,\cdots,\partial_n\}$, then $\Phi|_M$ satisfies
			\begin{equation}
				\nabla_i\Phi =\langle V,e_i \rangle,\ \ \nabla_j\nabla_i \Phi= \phi' g_{ij} - u h_{ij}, \label{Phi}
			\end{equation}
			and the support function $u=\langle V,\nu\rangle$ satisfies
			\begin{equation}
				\label{du}
				\nabla_i u = \langle V,e_l \rangle h_i^l,\ \ \nabla_j\nabla_i u= \langle V,\nabla h_{ij} \rangle + \phi' h_{ij} - u(h^2)_{ij},
			\end{equation}
			where $(h^2)_{ij}=h_{ik}h^k_j$.
		\end{lemma}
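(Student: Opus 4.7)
The four identities are standard consequences of (i) $V=D\Phi$ being conformal Killing with $DV=\phi'\overline{g}$, (ii) the Gauss and Weingarten relations between the ambient connection $D$ on $\mathbb{H}^{n+1}$ and the induced connection $\nabla$ on $M$, and (iii) the Codazzi equation in a space form, which in the form $\nabla_k h_{ij}=\nabla_i h_{kj}$ makes $\nabla h$ totally symmetric. The plan is to establish them in the order \eqref{Phi} then \eqref{du}, reusing each identity to cut work on the next.

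First I would observe that since $\Phi$ is a smooth function on $\mathbb{H}^{n+1}$ with $D\Phi=V$, the first identity in \eqref{Phi} is immediate: $\nabla_i\Phi=e_i(\Phi|_M)=\langle D\Phi,e_i\rangle=\langle V,e_i\rangle$. For the Hessian, I would record the general identity relating the intrinsic and ambient Hessians of a function $F$ on the ambient space restricted to $M$. Using the sign convention $D_{e_i}e_j=\nabla_{e_i}e_j-h_{ij}\nu$ coming from $h_{ij}=\langle D_{e_i}\nu,e_j\rangle$, one gets
\begin{equation*}
\nabla_j\nabla_i F=(D^2 F)(e_i,e_j)-h_{ij}\langle DF,\nu\rangle.
\end{equation*}
Applying this with $F=\Phi$ and using $D^2\Phi=DV=\phi'\overline{g}$ together with $\langle D\Phi,\nu\rangle=\langle V,\nu\rangle=u$ yields $\nabla_j\nabla_i\Phi=\phi' g_{ij}-uh_{ij}$.

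Next, for $\nabla_i u$ I would differentiate $u=\langle V,\nu\rangle$ directly: $\nabla_i u=\langle D_{e_i}V,\nu\rangle+\langle V,D_{e_i}\nu\rangle$. The first summand vanishes because $D_{e_i}V=\phi' e_i$ is tangential, and the Weingarten formula $D_{e_i}\nu=h_i^l e_l$ reduces the second to $\langle V,e_l\rangle h_i^l$.

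For the final identity I would differentiate once more: writing $\nabla_i u=\langle V,e_l\rangle h_i^l=(\nabla_l\Phi)h_i^l$ via the first part of \eqref{Phi}, the product rule gives
\begin{equation*}
\nabla_j\nabla_i u=(\nabla_j\nabla_l\Phi)h_i^l+(\nabla_l\Phi)\nabla_j h_i^l.
\end{equation*}
Substituting the Hessian formula just proved converts the first term into $\phi' h_{ij}-u(h^2)_{ij}$. For the second term I would invoke Codazzi: $\nabla_j h_i^l=g^{lm}\nabla_j h_{im}=g^{lm}\nabla_m h_{ij}=\nabla^l h_{ij}$, so $(\nabla_l\Phi)\nabla_j h_i^l=\langle V,\nabla h_{ij}\rangle$ (with $\nabla h_{ij}$ tangential, the distinction between $V$ and its tangential part is harmless). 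Combining gives the desired formula. The entire argument is mechanical; the only subtlety is being consistent with the sign convention in the Gauss formula, and the main ingredient of substance is Codazzi in the space-form setting.
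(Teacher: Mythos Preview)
Your proof is correct. The paper itself does not prove this lemma; it simply cites \cite[Lemmas 2.2 and 2.6]{Guan2013AMC} and states the formulas. Your argument supplies exactly the standard computation one would expect from that reference: using $DV=\phi'\overline{g}$, the Gauss/Weingarten relations, and Codazzi in a space form, with the helpful shortcut of writing $\nabla_i u=(\nabla_l\Phi)h_i^l$ so that the Hessian formula for $\Phi$ can be recycled in the Hessian of $u$.
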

		Then we have the first and second derivatives of the distance function $\rho$.
		
		\begin{corollary}[{\cite[Corollary 2.1]{Li2023AFA}}]
			We have
			\begin{equation}
				\nabla_i \rho = \dfrac{\langle V,e_i \rangle}{\phi},\ \ \nabla_j\nabla_i \rho= \dfrac{\phi'}{\phi}(g_{ij}-\nabla_{i} \rho\nabla_{j} \rho)-\dfrac{uh_{ij}}{\phi}.\label{rho}
			\end{equation}
		\end{corollary}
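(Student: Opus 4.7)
The plan is to derive both formulas as direct consequences of Lemma \ref{lem-hypersurface} applied to the potential $\Phi(\rho) = \cosh\rho - 1$ introduced just above that lemma. Since $\Phi$ depends only on $\rho$ and satisfies $\Phi'(\rho) = \phi(\rho)$, the scalar chain rule on $M$ immediately gives $\nabla_i \Phi = \phi(\rho)\, \nabla_i \rho$. Comparing this with the first identity of Lemma \ref{lem-hypersurface}, namely $\nabla_i \Phi = \langle V, e_i \rangle$, and dividing by $\phi > 0$ (which is legitimate since we are working on a hypersurface not passing through the origin, where $\rho > 0$), one obtains the stated expression $\nabla_i \rho = \langle V, e_i \rangle / \phi$.

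For the Hessian, I would differentiate the identity $\nabla_i \Phi = \phi(\rho)\, \nabla_i \rho$ covariantly a second time on $M$ and apply the chain rule to the scalar factor $\phi(\rho)\big|_M$, obtaining
\[
\nabla_j \nabla_i \Phi \;=\; \phi'(\rho)\, \nabla_j \rho\, \nabla_i \rho \;+\; \phi(\rho)\, \nabla_j \nabla_i \rho.
\]
Substituting the second identity of Lemma \ref{lem-hypersurface}, $\nabla_j \nabla_i \Phi = \phi'\, g_{ij} - u\, h_{ij}$, solving for $\nabla_j \nabla_i \rho$, and dividing by $\phi$ produces
\[
\nabla_j \nabla_i \rho \;=\; \frac{\phi'}{\phi}\bigl(g_{ij} - \nabla_i \rho\, \nabla_j \rho\bigr) \;-\; \frac{u\, h_{ij}}{\phi},
\]
which is precisely the desired formula.

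No substantive obstacle appears: the entire argument is a one-line chain-rule manipulation on top of Lemma \ref{lem-hypersurface}. The only mild point of care is recognizing that the quadratic term $\phi' \nabla_i \rho\, \nabla_j \rho$ arises inevitably from differentiating the scalar factor $\phi(\rho)$ through the scalar $\rho$ on $M$, which is why $\Phi$ (rather than $\rho$ itself) is used as the primitive in Lemma \ref{lem-hypersurface} in the first place — passing through $\Phi$ avoids this term at the first-derivative level and quarantines it to a clean correction at the second-derivative level.
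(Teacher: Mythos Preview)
Your proposal is correct and follows essentially the same approach as the paper: the paper also observes $\nabla_i\Phi=\phi\nabla_i\rho$ and $\nabla_j\nabla_i\Phi=\phi\nabla_j\nabla_i\rho+\phi'\nabla_j\rho\nabla_i\rho$, then combines these with the identities \eqref{Phi} from Lemma~\ref{lem-hypersurface} to obtain \eqref{rho} by direct calculation.
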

		\begin{proof}
			Observe that
			\begin{equation}
				\nonumber
				\nabla_{i}\Phi=\phi\nabla_{i}\rho,\quad \nabla_j\nabla_i \Phi=\phi\nabla_j\nabla_i \rho+\phi'\nabla_{j}\rho\nabla_{i}\rho.
			\end{equation}
		Combining with (\ref{Phi}), we get (\ref{rho}) by a direct calculation.
		\end{proof}
	
	    The $k$th elementary symmetric polynomial of $\kappa$ is defined by
	    \begin{equation}
	    	\sigma_k(\kappa) = \sum\limits_{1\leqslant i_1<i_2<\cdots<i_k\leqslant n}\kappa_{i_1}\kappa_{i_2}\cdots\kappa_{i_k}.\nonumber
	    \end{equation}
	    We also have the convention that $\sigma_0=1$ and $\sigma_k=0$ for $k>n$. This includes  the mean curvature $H=\sigma_1(\kappa)$ and Gauss curvature $K=\sigma_n(\kappa)$ as special cases. We have the following Minkowski formulas.
		\begin{lemma}[{\cite[Proposition 2.5]{Guan2013AMC}}]
			Let $M$ be a smooth closed hypersurface in $\mathbb{H}^{n+1}$. Then
			\begin{equation}
				(n-k)\int_M \phi' \sigma_k(\kappa) \mathrm{d}\mu = (k+1)\int_M u \sigma_{k+1}(\kappa) \mathrm{d}\mu,\ \ k=0,1,\cdots,n-1, \label{equ-minkf}
			\end{equation}
			where $\sigma_k(\kappa)$ is the $k$th elementary symmetric polynomial of $\kappa$.
		\end{lemma}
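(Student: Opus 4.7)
The plan is to derive the Minkowski identity from the conformal-Killing property of $V=\phi(\rho)\partial_\rho$ by a Newton-tensor divergence argument, integrated over the closed hypersurface $M$. The key object is the function $\Phi=\cosh\rho-1$, whose Hessian on $M$ was computed in Lemma \ref{lem-hypersurface} as $\nabla_j\nabla_i\Phi=\phi'g_{ij}-u h_{ij}$.

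Concretely, I would introduce the $k$th Newton tensor $T_k^{ij}$, defined as the derivative $\partial\sigma_{k+1}/\partial h_{ij}$ with indices raised by the inverse metric, together with its two standard traces $T_k^{ij}g_{ij}=(n-k)\sigma_k$ and $T_k^{ij}h_{ij}=(k+1)\sigma_{k+1}$. The crucial structural fact is that $T_k^{ij}$ is divergence free, $\nabla_i T_k^{ij}=0$. This follows from Codazzi: since $\mathbb{H}^{n+1}$ has constant sectional curvature, the normal-tangent components $\bar R(\nu,e_j,e_i,e_k)$ of the ambient curvature tensor vanish, so $\nabla_k h_{ij}$ is fully symmetric in $(i,j,k)$, and the usual Euclidean induction proof of divergence-freeness goes through verbatim in the hyperbolic setting.

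With these inputs, the one-line calculation is
\begin{equation}
\nabla_i\bigl(T_k^{ij}\nabla_j\Phi\bigr)=T_k^{ij}\,\nabla_i\nabla_j\Phi=(n-k)\phi'\sigma_k-(k+1)u\sigma_{k+1},\nonumber
\end{equation}
where the first equality uses divergence-freeness of $T_k^{ij}$ and the second substitutes the Hessian formula from Lemma \ref{lem-hypersurface} together with the two traces above. The left-hand side is a tangential divergence, so integrating over the closed hypersurface $M$ and applying the divergence theorem kills it and produces exactly the identity \eqref{equ-minkf}.

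I do not expect any serious obstacle. The only point meriting genuine verification is the divergence-freeness $\nabla_i T_k^{ij}=0$ in the hyperbolic ambient: once one notes that the ambient curvature in constant-curvature space forms makes Codazzi fully symmetric, the standard combinatorial induction carries over without change, and the rest of the proof is a direct integration by parts on the closed hypersurface.
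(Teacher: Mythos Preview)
Your proof is correct and is precisely the standard Newton-tensor argument for the hyperbolic Minkowski formulas. The paper itself does not reproduce a proof of this lemma but simply cites \cite[Proposition 2.5]{Guan2013AMC}, where essentially the same argument you outline is carried out.
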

	
	\subsection{Quermassintegrals}
	\label{subsec-quer}
	Recall that for a convex domain $\Omega$ in hyperbolic space $\mathbb{H}^{n+1}$, the quermassintegrals of $\Omega$ are defined as follows (see \cite[Definition 2.1]{G-Solanes05}):
	\begin{equation}
		\mathscr{A}_k(\Omega) = (n-k){n\choose k} \dfrac{\omega_{k}\cdots\omega_0}{\omega_{n-1}\cdots\omega_{n-k-1}}\int_{\mathcal{L}_{k+1}} \chi(L_{k+1}\cap \Omega) \mathrm{d}L_{k+1},\ k=0,1,\cdots,n-1,\nonumber
	\end{equation}
	where $\omega_k=|\mathbb{S}^k|$ denotes the area of $k$-dimensional sphere, $\mathcal{L}_{k+1}$ is the space of $k+1$-dimensional totally geodesic subspaces $L_{k+1}$ in $\mathbb{H}^{n+1}$. The function $\chi$ is defined to be 1 if $L_{k+1}\cap \Omega\neq\varnothing$ and to be 0 otherwise. Furthermore, we set
	\begin{equation}
		\mathscr{A}_{-1}(\Omega) = |\Omega|,\ \ 	\mathscr{A}_0(\Omega)=|\partial\Omega|,\ \ \mathscr{A}_n(\Omega) = |\mathbb{B}^{n+1}| = \dfrac{\omega_n}{n+1}.\nonumber\nonumber
	\end{equation}
	
	 The quermassintegrals and the curvature integrals of a smooth domain $\Omega$ in $\mathbb{H}^{n+1}$ are related by the following equations:
	\begin{eqnarray}
		&&\mathscr{A}_1(\Omega) = \int_{\partial\Omega} \sigma_1(\kappa) \mathrm{d}\mu_g - n\mathscr{A}_{-1}(\Omega),\notag\\
		&&\mathscr{A}_k(\Omega) = \int_{\partial\Omega} \sigma_k(\kappa) \mathrm{d}\mu_g - \dfrac{n-k+1}{k-1}\mathscr{A}_{k-2}(\Omega),\ \ k=2,\cdots,n.
	\end{eqnarray}
	The quermassintegrals for smooth domains satisfy a nice variation property:
	\begin{eqnarray}
		&&\dfrac{\mathrm{d}}{\mathrm{d} t}\mathscr{A}_{-1}(\Omega_t) = \int_{M_t} f \mathrm{d}\mu_t,\\
		&&\dfrac{\mathrm{d}}{\mathrm{d} t}\mathscr{A}_k(\Omega_t) = (k+1) \int_{M_t} f\sigma_{k+1}(\kappa) \mathrm{d}\mu_t,\ \ k=0,1,\cdots,n-1
	\end{eqnarray}
	along any outward normal variation with speed $f$.
	
	\subsection{Evolution equations}
	\indent 
	For convenience, we rewrite the flow \eqref{1.1} as
		\begin{equation}
			\partial_tX(x,t)=(\phi'-\Theta)\nu(x,t), \label{flow}
		\end{equation}
	where $\Theta=uK^{\frac{1}{n}}$. The following lemma can be found in \cite[Lemma 3.1]{Guan2013AMC} and \cite[Lemma 2.2]{Li2023AFA}. We include a proof here for reader's convenience.
	
		\begin{lemma}
			Along the flow \eqref{flow}, we have the following evolution equations. The induced metric evolves by
			\begin{equation}
				\partial_{t}g_{ij}=-2\Theta h_{ij}+2\phi'h_{ij},\label{gij}
			\end{equation}
		The support function evolves by
			\begin{equation}
				\partial_{t}u=-\phi'\Theta+(\phi')^{2}-\phi^{2}|\nabla\rho|^{2}+\langle V,\nabla\Theta \rangle,\label{u}
			\end{equation}
		The Weingarten matrix evolves by
			\begin{equation}
				\partial_{t}h_{i}^{j}=\nabla_{i}\nabla^{j}\Theta+\Theta h_{i}^{k}h_{k}^{j}-\phi'h_{i}^{k}h_{k}^{j}+uh_{i}^{j}-\Theta\delta_{i}^{j},\label{hij}
			\end{equation}
		where $\nabla$ is the Levi-Civita connection of the induced metric on $M_{t}$.
		\end{lemma}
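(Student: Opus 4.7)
The plan is to derive the three evolution equations by direct computation from $\partial_tX=F\nu$ with $F=\phi'-\Theta$, combining the Weingarten relation $D_{\partial_i}\nu=h_i^k\partial_kX$, the conformal Killing identity $DV=\phi'\overline{g}$, and the Hessian identities of Lemma~\ref{lem-hypersurface}.

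For the metric, differentiating $g_{ij}=\langle\partial_iX,\partial_jX\rangle$ in $t$ and commuting $\partial_t$ with $\partial_i$ yields $\partial_tg_{ij}=2\langle D_i(F\nu),\partial_jX\rangle=2Fh_{ij}$, since the normal component of $D_i(F\nu)$ drops out against $\partial_jX$ and $\langle D_i\nu,\partial_j\rangle=h_{ij}$. Substituting $F=\phi'-\Theta$ gives \eqref{gij}. For the support function, write $\partial_tu=\langle D_{\partial_tX}V,\nu\rangle+\langle V,\partial_t\nu\rangle$; the first term equals $F\phi'=(\phi')^2-\phi'\Theta$ by the conformal Killing property, and the second uses the standard identity $\partial_t\nu=-g^{ij}(\nabla_iF)\partial_j$ (a consequence of differentiating $\langle\nu,\partial_iX\rangle=0$), so it equals $-\langle V,\nabla\phi'\rangle+\langle V,\nabla\Theta\rangle$. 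Since $\phi'=\Phi+1$ one has $\nabla\phi'=\nabla\Phi=\phi\,\nabla\rho$, and Lemma~\ref{lem-hypersurface} identifies the tangential part of $V$ with $\nabla\Phi$, so $\langle V,\nabla\phi'\rangle=\phi\langle V,\nabla\rho\rangle=\phi^2|\nabla\rho|^2$, producing \eqref{u}.

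The Weingarten equation \eqref{hij} is the main obstacle, and I would handle it in two stages. Stage one is the general formula for $\partial_tX=F\nu$ in an ambient space of constant sectional curvature $c$,
\begin{equation*}
\partial_th_i^j=-\nabla_i\nabla^jF-Fh_i^kh_k^j-cF\delta_i^j,
\end{equation*}
obtained by differentiating $h_{ij}=\langle D_i\nu,\partial_j\rangle$, applying $\partial_t\nu=-g^{ij}(\nabla_iF)\partial_j$ together with the Ricci identity for the ambient curvature, and then converting $h_{ij}$ to the mixed tensor via $\partial_tg^{jk}=-2Fh^{jk}$. In $\mathbb{H}^{n+1}$ we have $c=-1$, so the curvature contribution becomes $+F\delta_i^j$. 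Stage two is the substitution $F=\phi'-\Theta$ combined with the identity $\nabla_i\nabla^j\phi'=\phi'\delta_i^j-uh_i^j$, which is inherited from Lemma~\ref{lem-hypersurface} because $\phi'=\Phi+1$. After this substitution, the two $\phi'\delta_i^j$ contributions cancel against each other and the remaining six monomials regroup into exactly \eqref{hij}.

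The delicate point is the sign and index bookkeeping in Stage one: getting the ambient curvature correction right for sectional curvature $-1$, and tracking the extra $-F(h^2)_i^j$ that arises in converting $\partial_th_{ij}$ to $\partial_th_i^j$ via the contraction with $\partial_tg^{jk}$. Once the ambient formula is correctly set up and the Hessian identity for $\phi'$ is available from Lemma~\ref{lem-hypersurface}, the final passage to \eqref{hij} is a one-line algebraic collapse.
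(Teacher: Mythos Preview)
Your proposal is correct and follows essentially the same approach as the paper: both compute $\partial_t g_{ij}$ and $\partial_t\nu$ directly, derive $\partial_t u$ from the conformal Killing identity and $\nabla\phi'=\phi\nabla\rho$, and obtain $\partial_t h_i^j$ via the Ricci identity plus the ambient curvature term, then simplify using $\nabla_i\nabla_j\phi'=\phi'g_{ij}-uh_{ij}$ from Lemma~\ref{lem-hypersurface}. The only cosmetic difference is that you first record the general formula $\partial_th_i^j=-\nabla_i\nabla^jF-F(h^2)_i^j-cF\delta_i^j$ and then substitute $F=\phi'-\Theta$, whereas the paper carries the specific speed through the whole computation of $\partial_th_{ij}$ before raising the index; the algebra is identical.
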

		\begin{proof}
			By a direct calculation, we have
			\begin{equation}
				\nonumber
				\begin{aligned}
					\partial_{t}g_{ij}=\partial_{t}\langle\partial_{i}X,\partial_{j}X \rangle &= \langle D_{i}((\phi'-\Theta)\nu(x,t)),\partial_{j}X \rangle + \langle \partial_{i}X ,D_{j}((\phi'-\Theta)\nu(x,t)) \rangle \\
					&=(\phi'-\Theta)( \langle D_{i}\nu , \partial_{j}X \rangle +\langle \partial_{i}X , D_{j}\nu \rangle ) =-2\Theta h_{ij}+2\phi' h_{ij}.
				\end{aligned}
			\end{equation}
		Since $\partial_{t}\nu$ is tangential,
			\begin{equation}
				\label{nu}
				\begin{aligned}
					\partial_{t}\nu =\langle \partial_{t}\nu,\partial_{i}X \rangle g^{il}\partial_{l}X 
					&=-\langle \nu,\partial_{i}((\phi'-\Theta)\nu) \rangle g^{il}\partial_{l}X \\
					&=\partial_{i}(-\phi'+\Theta)g^{il}\partial_{l}X\\
					&=\nabla(-\phi'+\Theta)\\
					&=\nabla\Theta-\phi\nabla\rho.
				\end{aligned}	
			\end{equation}
		Using (\ref{nu}), we obtain the evolution of the support function $u$ as follows:
			\begin{equation}
				\nonumber
				\begin{aligned}
					\partial_{t}u=\partial_{t} \langle V,\nu \rangle 
						&=\phi' \langle (\phi'-\Theta)\nu,\nu \rangle + \langle V, \nabla\Theta-\phi\nabla\rho \rangle \\
						&=-\phi'\Theta+(\phi')^{2}-\phi^{2}|\nabla\rho|^{2}+\langle V,\nabla\Theta \rangle .
				\end{aligned}
			\end{equation}
		Now we calculate the evolution of $h_{ij}$ as
			\begin{equation}
				\nonumber
				\begin{aligned}
					\partial_{t}h_{ij}
						&=-\partial_{t}\langle D_{\partial_{i}X}\partial_{j}X,\nu \rangle \\
						&=-\langle D_{\partial_{i}X}D_{\partial_{j}X}((\phi'-\Theta)\nu),\nu \rangle - R^{\mathbb{H}^{n+1}}(\partial_{i}X,\partial_{t}X,\partial_{j}X,\nu)-\langle D_{\partial_{i}X}\partial_{j}X,\nabla\Theta-\nabla\phi' \rangle \\
						&=\partial_i \partial_j(\Theta-\phi')-(\Theta-\phi')(h^{2})_{ij}+(\phi'-\Theta)g_{ij}-\langle D_{\partial_{i}X}\partial_{j}X,\nabla\Theta-\nabla\phi' \rangle \\
						&=\nabla_{i}\nabla_{j}(\Theta-\phi')-(\Theta-\phi')h_{i}^{k}h_{kj}+(\phi'-\Theta)g_{ij}\\
						&=\nabla_{i}\nabla_{j}\Theta-\Theta h_{i}^{k}h_{kj}+\phi' h_{i}^{k}h_{kj}-\Theta g_{ij}+uh_{ij},
				\end{aligned}
			\end{equation}
		where we used (\ref{Phi}) in the last equality. From (\ref{gij}), we have
			\begin{equation}
				\nonumber
				\partial_{t}g^{ij}=-g^{il}(\partial_{t}g_{lm})g^{mj}=2\Theta g^{il}h_{l}^{j}-2\phi'g^{il}h_{l}^{j}.
			\end{equation}
		Thus
			\begin{equation}
				\nonumber
				\partial_{t}h_{i}^{j}=\partial_{t}h_{il}g^{lj}+h_{il}\partial_{t}g^{lj}=\nabla_{i}\nabla^{j}\Theta+\Theta h_{i}^{k}h_{k}^{j}-\phi' h_{i}^{k}h_{k}^{j}+uh_{i}^{j}-\Theta\delta_{i}^{j}.
			\end{equation}
		\end{proof}
	\subsection{Parametrization by radial graph}
	For a closed star-shaped hypersurface $M \subset \mathbb{H}^{n+1}$, we can parametrize it as a graph of the radial function $\rho(\theta):\mathbb{S}^{n}\rightarrow\mathbb{R}^+$, i.e.,
		\begin{equation*}
			M^{n}=\{(\rho(\theta),\theta) \mid \rho:\mathbb{S}^{n}\rightarrow\mathbb{R}^+,\theta\in\mathbb{S}^{n}\},
		\end{equation*}
	where $\theta = (\theta_{1},\cdots, \theta_{n})$ is a local normal coordinate system of $\mathbb{S}^{n}$ and $\rho$ is a smooth function on  $\mathbb{S}^{n}$. Let $ f_{i}=\overline{\nabla}_if$, $f_{ij} =\overline{\nabla}^{2}_{ij}f$, where $\overline{\nabla}$ is the Levi-Civita connection on $\mathbb{S}^{n}$ with respect to the standard metric $g_{\mathbb{S}^{n}}$.
	
	 The tangent space of $M^{n}$ is spanned by $X_i = \rho_i \partial_{\rho} + \partial_{\theta_i}$ (see \cite[Section 2.3]{Locallyconstrained}) and the unit outward normal vector is
	 	\begin{equation*}
	 		\nu=\dfrac{\partial_{\rho}-\frac{\rho^{i}\partial_{\theta_i}}{\phi^{2}}}{w},
	 	\end{equation*}
 	where we set
 		\begin{equation}
 			\label{w}
 			w=\sqrt{1+\frac{|\overline{\nabla}\rho|^{2}}{\phi^{2}}}.
 		\end{equation}
 	Then the support function and the induced metric can be expressed as
 		\begin{equation}
 			\label{uw}
 			u=\dfrac{\phi^{2}}{\sqrt{\phi^{2}+|\overline{\nabla}\rho|^{2}}}=\dfrac{\phi}{w},
 		\end{equation}
 		\begin{equation}
 			g_{ij}=\phi^{2}\delta_{ij}+\rho_i \rho_j,\quad g^{ij}=\dfrac{1}{\phi^2}(\delta^{ij}-\dfrac{\rho_i \rho_j}{\phi^{2}+|\overline{\nabla}\rho|^{2}}).
 		\end{equation}
 	The second fundamental form is given by
 		\begin{equation}
 			h_{ij}=\dfrac{-\phi\rho_{ij}+2\phi'\rho_i \rho_j+\phi^{2}\phi'\delta_{ij}}{\sqrt{\phi^{2}+|\overline{\nabla}\rho|^{2}}},
 		\end{equation}
 	and we have the Weingarten matrix
 		\begin{equation}
 			\label{hi^j}
 			h_i^j=\dfrac{1}{\phi^2\sqrt{\phi^{2}+|\overline{\nabla}\rho|^{2}}}(\delta^{jk}-\dfrac{\rho_j \rho_k}{\phi^{2}+|\overline{\nabla}\rho|^{2}})(-\phi\rho_{ki}+2\phi'\rho_k \rho_i+\phi^{2}\phi'\delta_{ki}).
 		\end{equation}
 	Similar to \cite{Guan2013AMC}, the flow (\ref{flow}) can be written as a scalar parabolic PDE for the radial function $\rho$:
 		\begin{equation}
 			\label{equ-rho}
 			\begin{cases}
 				\partial_t \rho(\theta,t) = -\phi K^{\frac{1}{n}}+\phi' w,\quad \mbox{for}\  (\theta,t)\in\mathbb{S}^n\times [0,+\infty), \\ \rho(\cdot,0)=\rho_{0}(\cdot),
 			\end{cases} 			
 		\end{equation}
 	where $w$ is the function defined in (\ref{w}).
 	
 	Let $\gamma(\rho)$ satisfy
 	\begin{equation}
 		\dfrac{\mathrm{d}\gamma}{\mathrm{d}\rho} = \dfrac{1}{\phi(\rho)},  \label{equ-gammap}
 	\end{equation}
    then the flow (\ref{flow}) can be equivalent to the scalar parabolic equation for $\gamma$ on $\mathbb{S}^n$:
    \begin{equation}
    	\partial_t \gamma = \dfrac{\partial_t \rho}{\phi(\rho)} =  \dfrac{\phi'}{\phi}w-K^{\frac{1}{n}},\ \ (\theta,t)\in\mathbb{S}^n\times [0,+\infty).  \label{equ-gammat}
    \end{equation}

\section{$C^{0}$ and $C^{1}$ estimates}
\label{section3}
In this section, we will establish the $C^{0}$ and $C^{1}$ estimates of the flow (\ref{1.1}) for the proof of Theorem \ref{thm1.1}. Especially, we
show that the flow hypersurface $M_{t}$ preserves star-shapedness along (\ref{1.1}).

	\subsection{$C^{0}$ Estimate}

	We first use the maximum principle to show that the radial function $\rho$ of (\ref{equ-rho}) has uniform bounds.
		\begin{lemma}\label{lemma3.1}
			
			Let $\rho(\cdot,t)$ be a smooth, positive, uniformly convex solution to \eqref{equ-rho} on $\mathbb{S}^{n}\times [0,T)$.
   			 Then
    		\begin{equation}
    			\underset{\mathbb{S}^{n}}{\min}\rho(\cdot,0)\le\rho(\cdot,t)\le \underset{\mathbb{S}^{n}}{\max}\rho(\cdot,0), \qquad \forall\ t\in[0,T). \label{equ-c}
  		  	\end{equation}
		\end{lemma}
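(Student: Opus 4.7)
\medskip

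The plan is to apply the parabolic maximum principle directly to the scalar equation \eqref{equ-rho}, exploiting the fact that at a spatial extremum of $\rho(\cdot,t)$ the gradient term vanishes and the Hessian has a definite sign, which pins down the Gauss curvature $K$ relative to the geodesic sphere value $(\phi'/\phi)^n$.

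First, I would consider the function $\rho_{\max}(t)=\max_{\mathbb{S}^n}\rho(\cdot,t)$ and apply Hamilton's trick at a point $\theta_0$ where the maximum is attained. At $\theta_0$ one has $\overline{\nabla}\rho=0$ and $\overline{\nabla}^2\rho\leqslant 0$, hence from \eqref{w} we obtain $w=1$. Plugging $\overline{\nabla}\rho=0$ into the formula \eqref{hi^j} for the Weingarten matrix yields, at $\theta_0$,
\begin{equation*}
h_i^j \;=\; -\frac{\rho_i{}^{j}}{\phi^2}+\frac{\phi'}{\phi}\delta_i^j \;\geqslant\; \frac{\phi'}{\phi}\delta_i^j,
\end{equation*}
because $-\rho_i{}^j/\phi^2$ is a non-negative symmetric endomorphism at a spatial maximum. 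Taking determinants gives $K\geqslant (\phi'/\phi)^n$ at $\theta_0$, and therefore
\begin{equation*}
\partial_t \rho\big|_{\theta_0}
=-\phi K^{1/n}+\phi' w
\;\leqslant\; -\phi\cdot\frac{\phi'}{\phi}+\phi'\;=\;0 .
\end{equation*}
Hence $\rho_{\max}(t)$ is non-increasing, proving the upper bound in \eqref{equ-c}.

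The lower bound is entirely symmetric: at a minimum point of $\rho(\cdot,t)$ one again has $\overline{\nabla}\rho=0$, $w=1$, but now $\overline{\nabla}^2\rho\geqslant 0$, so the same identity yields $h_i^j\leqslant (\phi'/\phi)\delta_i^j$ and consequently $K\leqslant (\phi'/\phi)^n$. This forces $\partial_t\rho\geqslant 0$ at the minimum point, so $\rho_{\min}(t)$ is non-decreasing. Combining the two monotonicities gives \eqref{equ-c}.

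There is no real obstacle here; the argument is essentially the observation that geodesic spheres centered at the origin are static under the speed $\phi'-uK^{1/n}$ (this is the locally constrained structure of the flow) together with the comparison principle applied radially. The only minor care needed is to confirm the sign of $-\rho_{ij}/\phi^2$ at extrema — which is immediate since $\phi>0$ — and to note that the extrema $\rho_{\max}(t)$, $\rho_{\min}(t)$ are Lipschitz in $t$, so Hamilton's trick applies and the one-sided derivative inequalities above yield the pointwise bounds at all $t\in[0,T)$.
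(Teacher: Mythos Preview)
Your argument is correct and is essentially identical to the paper's proof: both compute the Weingarten matrix at a spatial extremum of $\rho$ using \eqref{hi^j} with $\overline{\nabla}\rho=0$, compare $K^{1/n}$ to $\phi'/\phi$, and conclude monotonicity of $\rho_{\max}$ and $\rho_{\min}$ via the maximum principle. Your explicit invocation of Hamilton's trick and the remark on Lipschitz regularity of the extrema are minor expository additions, not a different approach.
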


		\begin{proof}
			Fix time $t$ and suppose that $\rho$ attains its spatial maximum at point $(p_{0}, t)$. At $(p_{0}, t)$, we have $|\overline{\nabla}\rho|=0$ and $(\rho_{ij})\le0$. From (\ref{hi^j}),
			\begin{equation}
			h_{i}^{j}=\dfrac{\phi'}{\phi}\delta_{i}^{j}-\dfrac{1}{\phi^{2}}\rho_{i}^{j}
			\ge\dfrac{\phi'}{\phi}\delta_{i}^{j}.\label{3.2}
			\end{equation}
			Hence we have $K^{\frac{1}{n}}\ge\dfrac{\phi'}{\phi}$. Substituting it into (\ref{equ-rho}), we obtain
			\begin{equation}
				\partial_{t}\rho_{\max}\le-\phi\dfrac{\phi'}{\phi}+\phi'=0. \label{3.3}
			\end{equation}
			This implies that $\underset{\mathbb{S}^{n}}{\max}\rho(\cdot,t)$ is non-increasing. In particular,
			\begin{equation}
				\rho(\cdot,t)\le \underset{\mathbb{S}^{n}}{\max}\rho(\cdot,0), \qquad \forall\ t\in[0,T). \nonumber
			\end{equation}
			Similarly, $\underset{\mathbb{S}^{n}}{\min}\rho(\cdot,t)$ is non-decreasing, and
			\begin{equation}
				\rho(\cdot,t)\ge \underset{\mathbb{S}^{n}}{\min}\rho(\cdot,0), \qquad \forall\ t\in[0,T). \nonumber
			\end{equation}
			This proves the lemma.
		\end{proof}
	
	\subsection{$C^{1}$ Estimate}
	
		We now give the uniform upper bound of the gradient of $\rho$. It is obtained by combining \cite[Theorem 2.7.10]{curvatureproblem} and the $C^0$ estimate (Lemma \ref{lemma3.1}).
		\begin{lemma}\label{lemma3.3}
			Let $\rho(\cdot,t)$ be a smooth, positive, uniformly convex solution to \eqref{equ-rho} on $\mathbb{S}^{n}\times [0,T)$. Then 
			\begin{equation}
				|\overline{\nabla}\rho|\le C,  \label{equ-c0}
			\end{equation}
			where $C$ depends only on the initial hypersurface.
		\end{lemma}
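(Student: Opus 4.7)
My plan is to recast the gradient bound as a positive lower bound on the support function and then extract that lower bound from uniform convexity together with the already-established $C^0$ bound. Recall the identities derived in Section~\ref{section2} for a radial graph:
\begin{equation*}
u=\dfrac{\phi(\rho)}{w},\qquad w=\sqrt{1+\dfrac{|\overline{\nabla}\rho|^2}{\phi(\rho)^2}}.
\end{equation*}
Thus a uniform upper bound on $|\overline{\nabla}\rho|$ is equivalent to a uniform upper bound on $w$, which in turn (given that $\phi(\rho)$ is already controlled above by Lemma~\ref{lemma3.1}) is equivalent to a uniform positive lower bound on $u$. So the entire task is to prove $u\ge c(\rho_0)>0$ on $M_t$.

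To extract such a bound, I would argue at a spatial minimum point $p^{*}\in M_t$ of $u$. Closedness of $M_t$ guarantees such a point exists, and first-order optimality gives $\nabla_i u(p^{*})=0$. By the Minkowski-type formula~\eqref{du}, namely $\nabla_i u=\langle V,e_l\rangle h_i^{\,l}$, together with the assumed uniform convexity of $M_t$ (so the Weingarten matrix is invertible at $p^{*}$), this forces $\langle V,e_l\rangle=0$ for every tangential direction. Hence the conformal field $V$ is purely normal at $p^{*}$, so $V=u(p^{*})\nu$. Taking magnitudes and using $|V|=\phi(\rho)$ gives $u(p^{*})=\phi(\rho(p^{*}))$. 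Combining with Lemma~\ref{lemma3.1},
\begin{equation*}
\min_{M_t}u=u(p^{*})=\phi(\rho(p^{*}))\;\ge\;\phi\!\left(\min_{\mathbb{S}^n}\rho(\cdot,0)\right)\;=:c_0>0.
\end{equation*}

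Then $w=\phi(\rho)/u\le \phi(\max_{\mathbb{S}^n}\rho(\cdot,0))/c_0$, and~\eqref{w} gives
\begin{equation*}
|\overline{\nabla}\rho|^{2}=\phi(\rho)^{2}(w^{2}-1)\le C(\rho_0),
\end{equation*}
as desired. The only delicate point in the argument is invertibility of $(h_i^{\,j})$ at $p^{*}$, which is where the uniform convexity hypothesis is genuinely used; if one only assumes convexity, one instead invokes the more general reference \cite[Theorem~2.7.10]{curvatureproblem}, which is precisely the external route the statement advertises. I do not expect any serious obstacle: both ingredients (the support function identity and the $C^0$ bound) are already in hand.
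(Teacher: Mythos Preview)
Your argument is correct. At the spatial minimum of $u$, the identity $\nabla_i u=\langle V,e_l\rangle h_i^{\,l}$ together with invertibility of the Weingarten map forces $V^{\top}=0$, whence $u=\phi(\rho)$ there; the $C^0$ bound from Lemma~\ref{lemma3.1} then gives $\min_{M_t}u\ge\phi(\min_{\mathbb S^n}\rho_0)>0$, and the gradient bound follows from $u=\phi/w$. The only tacit point is the sign $u=+\phi$ rather than $-\phi$, but this is automatic since the hypothesis places the solution in radial-graph form, where $u=\phi/w>0$.

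This is a genuinely different route from the paper's proof, which simply quotes \cite[Theorem~2.7.10]{curvatureproblem} to the effect that convexity alone yields $|\overline{\nabla}\rho|\le C\,\mathrm{osc}(\rho)$, and then appeals to Lemma~\ref{lemma3.1}. Your argument is more self-contained and in fact gives the sharp explicit lower bound $\min_{M_t}u\ge\phi(\min_{\mathbb S^n}\rho_0)$, at the price of genuinely using \emph{uniform} convexity (to invert $(h_i^{\,j})$). The cited reference, by contrast, requires only convexity; as you note, this distinction does not matter here because the lemma's hypothesis already assumes uniform convexity on $[0,T)$.
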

	    \begin{proof}
	    	Since $\rho(\cdot,t)$ is a smooth uniformly convex solution to \eqref{equ-rho} on $\mathbb{S}^n\times [0,T)$, we have $|\overline{\nabla}\rho|(\cdot,t)\leqslant C\left(\max_{\mathbb{S}^n}\rho(\cdot,t)-\min_{\mathbb{S}^n}\rho(\cdot,t)\right)$ by \cite[Theorem 2.7.10]{curvatureproblem}. Combining the $C^0$ estimate \eqref{equ-c}, we obtain \eqref{equ-c0}, where the positive constant $C$ depends only on the initial hypersurface.
	    \end{proof}
		
		Along the flow \eqref{flow}, star-shapedness preservation follows by combining Lemmas \ref{lemma3.1} and \ref{lemma3.3}.
		
		\begin{corollary}
			\label{cor-starshapedness}
			Along the flow \eqref{flow}, the uniformly convex hypersurface $M_{t}$ preserves star-shapedness and the support function $u$ satisfies
			\begin{equation}
				\frac{1}{C}\le u \le C
			\end{equation}
			for all $t\in [0, T)$ for some constant $C>0$, where $C$ only depends on the initial hypersurface.
		\end{corollary}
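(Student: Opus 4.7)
The plan is to derive the two-sided bound on the support function $u$ directly from the formula
$u=\phi/w$ given in \eqref{uw}, together with the $C^0$ and $C^1$ estimates already established in Lemmas \ref{lemma3.1} and \ref{lemma3.3}, and then read off star-shapedness as a consequence of the positivity of $u$.

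First, I would invoke Lemma \ref{lemma3.1} to obtain $0<\rho_{\min}\leqslant \rho(\cdot,t)\leqslant \rho_{\max}$ on $\mathbb{S}^n\times[0,T)$, where $\rho_{\min}:=\min_{\mathbb{S}^n}\rho(\cdot,0)$ and $\rho_{\max}:=\max_{\mathbb{S}^n}\rho(\cdot,0)$. Since $\phi(\rho)=\sinh\rho$ is strictly increasing and positive on $(0,+\infty)$, this gives uniform two-sided positive bounds on $\phi(\rho)$ along the flow, depending only on the initial hypersurface. Next, I would apply Lemma \ref{lemma3.3} to get $|\overline{\nabla}\rho|\leqslant C$ for some $C$ depending only on the initial hypersurface.

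Substituting these bounds into $w=\sqrt{1+|\overline{\nabla}\rho|^2/\phi^2}$, I would obtain $1\leqslant w\leqslant C'$ for some $C'$ depending only on the initial data. Then from \eqref{uw},
\begin{equation*}
u=\frac{\phi(\rho)}{w},
\end{equation*}
so $u$ is sandwiched between two positive constants depending only on the initial hypersurface, proving $\tfrac{1}{C}\leqslant u \leqslant C$ after adjusting the constant. In particular, $u>0$ uniformly on $\mathbb{S}^n\times[0,T)$, which is exactly the definition of star-shapedness, so $M_t$ remains star-shaped as long as it stays smooth and uniformly convex.

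There is no real obstacle here; the corollary is a direct combination of the preceding two lemmas with the explicit formula \eqref{uw}. The only thing to be careful about is that the constants produced depend only on the initial geometric data (through $\rho_{\min}$, $\rho_{\max}$, and the $C^1$ constant in Lemma \ref{lemma3.3}) and not on $T$ or $t$, so the estimate is genuinely uniform in time on $[0,T)$.
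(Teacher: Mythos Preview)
Your proposal is correct and follows essentially the same approach as the paper: both derive the two-sided bound on $u$ directly from the formula $u=\phi/w$ in \eqref{uw}, using the $C^0$ estimate (Lemma \ref{lemma3.1}) to bound $\phi$ and the $C^1$ estimate (Lemma \ref{lemma3.3}) to bound $w$, and then read off star-shapedness from the positivity of $u$ (equivalently, of $\langle\partial_\rho,\nu\rangle=1/w$).
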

		\begin{proof}
			Recall (\ref{uw}),
			\begin{equation}
			u=\dfrac{\phi}{\sqrt{1+\frac{|\overline{\nabla}\rho|^{2}}{\phi^{2}}}}.\nonumber
			\end{equation}
			The upper and lower bounds of $u$ follow from Lemmas \ref{lemma3.1} and \ref{lemma3.3}. Besides, we have
			\begin{equation}
				\langle\partial_{\rho},\nu\rangle=\dfrac{u}{\phi}=\dfrac{1}{w}=\dfrac{1}{\sqrt{1+\frac{|\overline{\nabla}\rho|^{2}}{\phi^{2}}}}\ge\dfrac{1}{C^{'}}\nonumber
			\end{equation}
		for some $C'>0$ depends on $\max_{\mathbb{S}^n\times [0,T)}|\overline{\nabla}\rho|$ and $\min_{\mathbb{S}^n\times [0,T)}\rho$. Thus the hypersurface $M_t$ preserves star-shapedness along the flow (\ref{flow}).
		\end{proof}
	
\section{$C^{2}$ estimate}
\label{section4}
In this section, we will prove the $C^{2}$ estimate of the flow (\ref{flow}).

	\subsection{The upper bound of Gauss curvature $K$}
	We show that $K$ is bounded above along (\ref{flow}).
		\begin{lemma}\label{lemma4.1}
			Let $X(\cdot,t)$ be a uniformly convex solution to the flow \eqref{flow} which encloses the origin for $t\in[0,T)$. Then there is a positive constant C depending only on n, $M_{0}$ and the uniform bounds of $\rho$ such that
			\begin{equation}
				K(\cdot,t)\le C,\qquad\forall\ t\in [0,T).
			\end{equation}
		\end{lemma}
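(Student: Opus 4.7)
The plan is to bound the flow speed $\Theta := u K^{1/n}$ uniformly from above. By Corollary \ref{cor-starshapedness} the support function $u$ is bounded above and below by positive constants, so such a bound is equivalent to bounding $K$ from above. The $C^0$ estimate (Lemma \ref{lemma3.1}) also yields uniform bounds on $\phi=\sinh\rho$, $\phi'=\cosh\rho$ and $\Phi=\cosh\rho-1$ along the flow.

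Following the philosophy of \cite[Section 4]{Li2023AFA} and \cite[Lemma 4.2]{alexandrovproblem2020}, I would introduce an auxiliary function of the form
\begin{equation*}
W = \log\Theta + A\,f(\rho,u),
\end{equation*}
for a coupling constant $A>0$ and a suitably chosen smooth function $f$ (a natural candidate being $f=-\log u$, or $f=-\Phi$, or $f=-\log(c-u)$ for appropriate $c$); the precise shape of $f$ is dictated by the cancellation required in the evolution of $W$. Writing $(b^j_i)$ for the inverse of the Weingarten matrix $(h^i_j)$, the parabolic operator naturally associated to the flow is
\begin{equation*}
\mathcal{L} := \partial_t - \tfrac{uK^{1/n}}{n}\,b^j_i\,\nabla_i\nabla^j,
\end{equation*}
since $\partial (K^{1/n})/\partial h^i_j = (K^{1/n}/n)\,b^j_i$. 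A direct computation using \eqref{u}, \eqref{hij}, and $\Theta = u K^{1/n}$ gives
\begin{equation*}
\mathcal{L}\Theta = K^{1/n}\bigl(-\phi'\Theta + (\phi')^2 - \phi^2|\nabla\rho|^2 + \langle V,\nabla\Theta\rangle\bigr) + \tfrac{uK^{1/n}}{n}\bigl(\Theta H - \phi' H + n u - \Theta\operatorname{tr}(b)\bigr),
\end{equation*}
while $\mathcal{L}f$ admits a tractable expression in $\phi$, $\phi'$, $u$, $\Theta$, $H$ and $\operatorname{tr}(b)$ via \eqref{du} and \eqref{rho}.

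At a space-time point $(x_0,t_0)$ where $W$ attains a new maximum, the condition $\nabla W=0$ converts the transport term $\langle V,\nabla\Theta\rangle$ into a bounded expression linear in $A$, and the inequality $\mathcal{L}W\geq 0$ (using $\mathcal{L}\log\Theta \geq \mathcal{L}\Theta/\Theta$, valid because $(b^j_i)$ is positive definite for uniformly convex $M_t$) delivers a scalar inequality in $\Theta$. The AM--GM bounds $H\geq nK^{1/n}$ and $\operatorname{tr}(b)\geq nK^{-1/n}$ turn this into a quadratic-in-$\Theta$ constraint that forces $\Theta(x_0,t_0)\leq C$, and hence $K\leq C$ along the flow.

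The hard part is not the maximum-principle bookkeeping but the algebraic engineering of the cancellation. A na\"ive choice such as $W=\Theta$ or $W=\log\Theta$ fails because $\mathcal{L}\log\Theta$ contains the manifestly positive term $\tfrac{u}{n}K^{1/n}H$, which grows with $\Theta$ and trivialises the inequality $\mathcal{L}\log\Theta\geq 0$ for large $\Theta$. The role of the auxiliary term $A f$ is precisely to generate, via $A\,\mathcal{L}f$, an equal-and-opposite leading contribution that cancels $\tfrac{u}{n}K^{1/n}H$, while producing only lower-order terms that can be dominated using the AM--GM estimates above and the uniform bounds on $u$ and $\phi'$. Identifying the correct $f$ and the constant $A$ is the technical crux of the argument, but once they are fixed the remaining analysis is a routine consequence of $\nabla W=0$ and $\mathcal{L}W\geq 0$ at the maximum, yielding the desired upper bound on $K$.
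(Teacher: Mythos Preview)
Your strategy coincides with the paper's: bound $\Theta=uK^{1/n}$ via the maximum principle applied to $\log\Theta$ plus a barrier in $u$. The paper takes $Q=\log\Theta-\log(u-a)$ with $a=\tfrac12\inf u>0$, i.e.\ your candidate $f=-\log u$ with a shift. Two of your other candidates, however, do not achieve the cancellation you describe: $f=-\Phi$ contributes no $\Theta H$ term (since $\nabla_i\nabla_j\Phi=\phi' g_{ij}-u h_{ij}$ is only linear in $h$), and $f=-\log(c-u)$ adds a \emph{further positive} multiple of $\Theta H$, worsening the problem. More importantly, the mechanism is not ``equal-and-opposite cancellation'' but strict \emph{over}-cancellation: at a spatial maximum the $\Theta H$ contributions combine as
\[
\tfrac{1}{n}\Theta H-\tfrac{u}{n(u-a)}\Theta H=-\tfrac{a}{n(u-a)}\Theta H\leq -\tfrac{a}{u-a}\,\Theta K^{1/n}=-C_3\,\Theta^2,
\]
and it is this residue, present only because $a>0$, that yields $\partial_tQ\leq C_1+C_2\Theta-C_3\Theta^2$ and hence the bound on $\Theta$.

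One small slip: the inequality $\mathcal{L}\log\Theta\geq\mathcal{L}\Theta/\Theta$ is true but points the wrong way. At the maximum you know $\mathcal{L}W\geq0$ and need an \emph{upper} bound on $\mathcal{L}\log\Theta$; the positive gradient-squared discrepancy $\tfrac{uK^{1/n}}{n}b^{ij}\Theta_i\Theta_j/\Theta^2$ is not discarded by this inequality but eliminated via the critical-point condition $\nabla\log\Theta=\nabla\log(u-a)$, which makes the first-order terms in $\partial_tQ$ and in the elliptic part match exactly.
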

		
		\begin{proof}
			Consider the auxiliary function
			\begin{equation}
				Q=\log\Theta-\log(u-a),\nonumber
			\end{equation}
			where $\Theta=uK^{\frac{1}{n}}$ and $a=\frac{1}{2}\inf_{M\times[0,T)}u>0$.
			
			 Using (\ref{u}) and (\ref{hij}), we then have
			 \begin{equation}\label{4.2}
			 	\begin{aligned}
		 		   \partial_{t}\Theta=
		 		   &\partial_{t}uK^{\frac{1}{n}}+\frac{1}{n}\dfrac{\Theta}{K}\dfrac{\partial K}{\partial h_{i}^{j}}\left(\nabla_{i}\nabla^{j}\Theta + \Theta h_{i}^{k}h_{k}^{j}-\phi' h_{i}^{k}h_{k}^{j}+uh_{i}^{j}-\Theta\delta_{i}^{j}\right) \\
		 		   =&	[(\phi')^{2}-\phi^{2}|\nabla\rho|^{2}-\phi'\Theta+\langle V,\nabla\Theta \rangle]K^{\frac{1}{n}} \\
		 		   &+\frac{1}{n}\dfrac{\Theta}{K}\dot{K}^{ij}\Theta_{ij}+\dfrac{\Theta^{2}}{n}H
		 		   -\dfrac{\Theta\phi'}{n}H	+ u\Theta -   \dfrac{\Theta^{2}}{n}\dfrac{\sigma_{n-1}}{K},
			 	\end{aligned}
			 \end{equation}
		 where we used
		 	\begin{equation*}
		 		\dfrac{\partial K}{\partial h_{i}^{j}}\delta_{i}^{j}=\sigma_{n-1} \quad \mbox{and} \quad \dfrac{\partial K}{\partial h_{i}^{j}}h_{i}^{k}h_{k}^{j}=KH
		 	\end{equation*}
	 	in the last equality and denote $\dot{K}^{ij}=\dfrac{\partial K}{\partial h_i^j}$. Recall (\ref{u}),
	 		\begin{equation}
	 			\begin{aligned}
	 				\partial_{t}u=&(\phi')^{2}-\phi^{2}|\nabla\rho|^{2}-\phi'\Theta+\langle V,\nabla\Theta \rangle\\
	 							=&(\phi')^{2}-\phi^{2}|\nabla\rho|^{2}-\phi'\Theta+K^{\frac{1}{n}}\langle V,\nabla u\rangle+\dfrac{uK^{\frac{1}{n}-1}}{n}\langle V,\nabla K \rangle\\
	 							=&(\phi')^{2}-\phi^{2}|\nabla\rho|^{2}-\phi'\Theta+K^{\frac{1}{n}}\langle V,\nabla u\rangle+\dfrac{\Theta}{nK}\dot{K}^{ij}\langle V,\nabla h_{ij}\rangle.\nonumber
	 			\end{aligned}
	 		\end{equation}
 		Combining with (\ref{du}), we obtain the evolution equation of $u$:
 			\begin{equation}\label{4.3}
 				\partial_{t}u=(\phi')^{2}-\phi^{2}|\nabla\rho|^{2}-2\phi'\Theta+K^{\frac{1}{n}}\langle V,\nabla u\rangle+\dfrac{u\Theta}{n}H+\dfrac{\Theta}{nK}\dot{K}^{ij}u_{ij}.
 			\end{equation}
 		At the spatial maximum point $(p_{0}, t)$ of $Q$ on $M_{t}$, we have
 			\begin{equation}\label{4.4}
 				\dfrac{\nabla\Theta}{\Theta}=\dfrac{\nabla u}{u-a}\quad \mbox{and} \quad
 				Q_{ij}=\dfrac{\Theta_{ij}}{\Theta}-\dfrac{u_{ij}}{u-a}\le 0.
 			\end{equation}
		Combining with (\ref{4.2}), (\ref{4.3}) and (\ref{4.4}), we obtain
			\begin{equation}
				\begin{aligned}
					\partial_{t}Q=&\dfrac{\partial_{t}\Theta}{\Theta}-\dfrac{\partial_{t}u}{u-a} \\
								 =&\dfrac{((\phi')^{2}-\phi^{2}|\nabla\rho|^{2})}{u}-\dfrac{\phi'\Theta}{u}+\dfrac{\langle V,\nabla\Theta\rangle}{u}+\dfrac{1}{nK}\dot{K}^{ij}\Theta_{ij}+\dfrac{1}{n}\Theta H-\dfrac{\phi'}{n}H+ u -\dfrac{\Theta}{n}\dfrac{\sigma_{n-1}}{K} \\
								 &-\dfrac{((\phi')^{2}-\phi^{2}|\nabla\rho|^{2})}{u-a}+2\dfrac{\phi'\Theta}{u-a}-\dfrac{u}{n(u-a)}\Theta H-\dfrac{\Theta}{n(u-a)K}\dot{K}^{ij}u_{ij}-\dfrac{\Theta}{u(u-a)}\langle V,\nabla u\rangle\\
								 \le&\dfrac{\Theta}{nK}Q_{ij}-\dfrac{a}{u(u-a)}((\phi')^{2}-\phi^{2}|\nabla\rho|^{2})+ u +\left(\dfrac{2\phi'}{u-a}-\dfrac{\phi'}{u}\right)\Theta-\dfrac{a}{n(u-a)}\Theta H, \nonumber
				\end{aligned}				
			\end{equation}
		where we used (\ref{4.4}) in the last inequality. Then we use the arithmetic mean and geometric mean inequality $H\ge nK^{\frac{1}{n}}$ and the $C^{0}$ estimate to obtain
			\begin{equation}
				\partial_{t}Q\le C_{1}+C_{2}\Theta-C_{3}\Theta^{2}\nonumber
			\end{equation}
		at the spatial maximum point $(p_0,t)$ for some $C_{1},C_{2},C_{3}>0$ depending only on  $n$, $M_{0}$ and the uniform bounds of $\rho$. Besides, there is
		a constant $C_{4}$ depending on the uniform upper and lower bounds of $u$ such that
			\begin{equation*}
				\dfrac{1}{C_{4}}\mathrm{e}^{Q}\le\Theta=(u-a)\mathrm{e}^{Q}\le C_{4}\mathrm{e}^{Q}.
			\end{equation*}
		We finally obtain that
			\begin{equation}
				\partial_{t}Q\le C_{1}+C_{2}C_{4}\mathrm{e}^{Q}-C_{3}(C_{4})^{-2}\mathrm{e}^{2 Q}\nonumber
			\end{equation}
		holds at the spatial maximum point $(p_0,t)$. Therefore, $Q\le \max\{C_{5}, Q_{\max}(0)\}$, where $C_{5}$ is a positive constant depending on $C_{1},C_{2},C_{3}$ and $C_{4}$. Hence $K=(\frac{u-a}{u}\mathrm{e}^{Q})^{n}$ is bounded from above by some positive constant $C$ depending only on  $n$, $M_{0}$ and the uniform bounds of $\rho$.
		\end{proof}

	\subsection{The bound of principal curvatures}
		We show that the principal curvatures of $M_{t}$ are uniformly bounded along the flow (\ref{flow}).
		
		\begin{lemma}\label{lemma4.2}
			Let $X(\cdot,t)$ be a uniformly convex solution to the flow \eqref{flow} which encloses the origin for $t\in[0,T)$. Then there is a positive constant $C$ depending only on n, $M_{0}$ and the uniform bounds of $\rho$ such that the principal curvatures satisfy
			\begin{equation}
				\dfrac{1}{C}\le\kappa_{i}\le C
			\end{equation}
			for all $t\in[0,T)$ and $i=1,2,\dots,n$.
		\end{lemma}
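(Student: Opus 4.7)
The plan is two-fold: first establish a uniform upper bound on the largest principal curvature $\kappa_{\max}$, and then derive a positive lower bound on the Gauss curvature $K$; combined with Lemma~\ref{lemma4.1}, these yield the two-sided estimate $1/C\leqslant \kappa_i\leqslant C$.

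For the upper bound on $\kappa_{\max}$, I would consider an auxiliary function of the form
\[
W \;=\; \log \lambda_{\max}(h^i_j)\;-\;\alpha\log(u-a)\;+\;\beta\rho,
\]
with $a=\tfrac{1}{2}\inf_{M\times[0,T)}u>0$ and with $\alpha,\beta>0$ to be chosen later. At a spatial maximum $(p_0,t_0)$, choose a normal frame that diagonalizes $h^i_j$ and assume $\kappa_1(p_0)=\lambda_{\max}(p_0)$, so that $\log h^1_1$ is smooth at $p_0$ and attains the same spatial maximum as $W+\alpha\log(u-a)-\beta\rho$ there. Using the evolution equations (\ref{u}) and (\ref{hij}) together with the derivative formulas (\ref{du}), (\ref{rho}), and the expansion
\[
\nabla_i \Theta \;=\; K^{1/n}\nabla_i u \;+\; \tfrac{\Theta}{nK}\,\dot K^{jk}\,\nabla_i h_{jk},
\]
I would derive a parabolic inequality for $\log h^1_1$. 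The concavity of $K^{1/n}$ in $h^i_j$ allows discarding the third-order gradient contributions coming from $\dot K^{jk}\nabla_j\nabla_k h^1_1$ at $p_0$, and the critical conditions $\nabla W=0$ replace $\nabla h^1_1$ in terms of $\nabla u$ and $\nabla\rho$. Combined with the upper bound $K\leqslant C$ from Lemma~\ref{lemma4.1} and the $C^0$, $C^1$ bounds of Section~\ref{section3}, this should leave a dominant balance of the form $-c\,\kappa_1^2+O(\kappa_1)$ at $(p_0,t_0)$, which bounds $\kappa_1(p_0,t_0)$ and hence $\kappa_{\max}$ uniformly on $M\times[0,T)$.

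Once $\kappa_{\max}\leqslant C$ has been established, it suffices to prove a uniform positive lower bound on $K$, since $\kappa_{\min}\geqslant K/\kappa_{\max}^{n-1}$ then gives the lower bound on all principal curvatures. I would apply the same maximum-principle scheme to $\widetilde W=\log(u+b)-\log\Theta$ for a suitable $b>0$: using (\ref{u}) and the evolution of $\Theta$ derived from (\ref{hij}) via $\Theta=uK^{1/n}$, and exploiting the already-established upper bounds on $\rho$, $u$, and $\kappa_{\max}$, one obtains a parabolic inequality that bounds $\widetilde W$ from above, yielding $K\geqslant 1/C$ uniformly. Combining the two bounds then gives the desired estimate $1/C\leqslant\kappa_i\leqslant C$.

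The hardest step is the upper bound on $\kappa_{\max}$. The speed $\Theta=uK^{1/n}$ couples $h^1_1$ not only to $u$ and $\rho$ but also to the full Weingarten matrix via $\dot K^{jk}$, so $\nabla^i\nabla_j\Theta$ in (\ref{hij}) generates cross-terms involving $\nabla u\cdot\nabla h_{ij}$, curvature commutator contributions from the Ricci identity in $\mathbb{H}^{n+1}$, and additional positive terms in $h$. The parameters $\alpha$ and $\beta$ must be chosen so that the corrections from $-\alpha\log(u-a)$ and $\beta\rho$ dominate the bad contribution $\Theta\kappa_1^2$ arising from $\Theta h_i^k h_k^j$ in (\ref{hij}), while still leaving a negative quadratic in $\kappa_1$ at the maximum point. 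This delicate balance, in the spirit of \cite{Li2023AFA} and \cite{alexandrovproblem2020}, is the technical heart of the proof.
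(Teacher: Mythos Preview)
Your strategy is the mirror image of the paper's, and the mirror is where the argument cracks. The paper does \emph{not} first bound $\kappa_{\max}$ and then prove $K\geqslant 1/C$; it goes the other way. It introduces the \emph{maximal principal radius} $\lambda=\max_{g(\xi,\xi)=1}h^{ij}\xi_i\xi_j=1/\kappa_{\min}$ and the test function $\tilde Q=\log\lambda-Au+B\rho$ (note: no logarithm on $u$). The reason this pairing is natural is that the linearised operator is $\frac{\Theta}{n}h^{ij}\nabla_i\nabla_j$, whose coefficients are $h^{ij}$; applying it to $B\rho$ via (\ref{rho}) produces a good term $-BC_0\sum_i h^{ii}\leqslant -BC_0 h^{11}$, which directly controls the quantity being estimated. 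After bounding $\lambda$ (hence $\kappa_i\geqslant c_0$), the upper bound on $\kappa_{\max}$ is a one-line consequence of Lemma~\ref{lemma4.1}: $C\geqslant K\geqslant c_0^{\,n-1}\kappa_{\max}$.

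Two concrete problems with your route. First, the ``dominant balance $-c\kappa_1^2+O(\kappa_1)$'' you expect does not occur: in (\ref{hij}) the term $\Theta(h_1^1)^2$ is exactly cancelled by the $-\Theta(h_{11})^2$ coming from $\nabla_1\nabla_1\Theta$ (see (\ref{4.7})), so there is nothing quadratic to dominate and your description of what $\alpha,\beta$ must achieve is off. Second, and more seriously, the lower bound on $K$ via $\widetilde W=\log(u+b)-\log\Theta$ does not close. Reversing the computation of Lemma~\ref{lemma4.1}, the evolution of $-\log\Theta$ contributes the term $+\dfrac{\Theta}{n}\dfrac{\sigma_{n-1}}{K}=\dfrac{uK^{1/n}}{n}\sum_i\kappa_i^{-1}$, and having $\kappa_{\max}\leqslant C$ alone does \emph{not} control $\sum\kappa_i^{-1}$: if $\kappa_{\min}=\varepsilon\to0$ then $K^{1/n}\sum\kappa_i^{-1}\gtrsim\varepsilon^{1/n-1}\to\infty$. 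So the very quantity you are trying to bound below ($K$, equivalently $\kappa_{\min}$) reappears uncontrolled in the inequality, and the auxiliary terms you propose do not absorb it. This is precisely why the paper attacks $1/\kappa_{\min}$ directly: the $B\rho$ correction supplies $-BC_0\sum h^{ii}$, which is the right good term for that quantity, and the circularity never arises.
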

	
		\begin{proof}
			Denote by $\lambda(x,t)$ the maximal principal radii at $X(x,t)$. Let $A$ and $B$ be the positive constants to be determined later. Let $\tilde{Q}=\log\lambda(x,t)-Au+B\rho$. Fix an arbitrary time $T_{0}\in[0,T)$. Assume that $\tilde{Q}$ attains its maximum
			at $(x_{0},t_{0})$ provided $t\in[0,T_{0}]$. We then introduce a normal coordinate system $\{\partial_{i}\}$ around $(x_{0},t_{0})$ such that
			$\nabla_{\partial_{i}X}\partial_{j}X(x_{0},t_{0})=0$ for all $i,j= 1, 2,\dots, n$ and $h_{ij}(x_{0},t_{0})=\kappa_{i}(x_{0},t_{0})\delta_{ij}$. Furthermore, we can choose $\partial_{1}(x_{0},t_{0})$ as the
			eigenvector with respect to $\lambda(x_{0},t_{0})$, i.e., $\lambda(x_{0},t_{0})=h^{11}(x_{0},t_{0})$. Assume that $\{h^{ij}\}$ is the inverse matrix of $\{h_{ij}\}$. Clearly,
				\begin{equation}
					\lambda(x,t)=\max\{h^{ij}(x,t)\xi_{i}\xi_{j}\, |\, g^{ij}(x,t)\xi_{i}\xi_{j}=1\}.\label{4.5}
				\end{equation}
			For the continuity, using this coordinate system, we consider the auxiliary function
			\begin{equation}
				Q=\log v-Au+B\rho,
			\end{equation}
			where $v=\dfrac{h^{11}}{g^{11}}$. Note that $v(x,t) \le \lambda(x, t)$ from (\ref{4.5}) and $v(x_{0},t_{0}) = \lambda(x_{0},t_{0})$. Thus $Q(x, t) \le \tilde{Q}(x_{0},t_{0})$ for all $t\in [0, T_{0}]$.
			
			Now we calculate the derivatives of $v$ at $(x_{0},t_{0})$ as follows:
				\begin{equation}
					\begin{aligned}
						\partial_{t}v&=-(h^{11})^{2}\partial_{t} h_{11}+h^{11}\partial_{t} g_{11}=-(h^{11})^{2}\partial_{t}h_{1}^{1},\\
						\nabla_{i}v&=\dfrac{1}{g^{11}}\dfrac{\partial h^{11}}{\partial h_{pq}}\nabla_{i}h_{pq}=-\dfrac{1}{g^{11}}h^{1p}h^{1q}\nabla_{i}h_{pq},\quad 
						\nabla_{i}v(x_{0},t_{0})=-(h^{11})^{2}\nabla_{i}h_{11}.\label{4.6}
					\end{aligned}
				\end{equation}
			Then
				\begin{equation}
					\begin{aligned}
						\nabla_{j}\nabla_{i}v(x_{0},t_{0})
									&=\nabla_{j}(-\dfrac{1}{g^{11}}h^{1p}h^{1q}\nabla_{i}h_{pq})\\
									&=-h^{1q}\nabla_{j}h^{1p}\nabla_{i}h_{pq}-h^{1p}\nabla_{j}h^{1q}\nabla_{i}h_{pq}-h^{1p}h^{1q}\nabla_{j}\nabla_{i}h_{pq}\\
									&=h^{1q}h^{1r}h^{ps}\nabla_{j}h_{rs}\nabla_{i}h_{pq}+h^{1p}h^{1r}h^{qs}\nabla_{j}h_{rs}\nabla_{i}h_{pq}-h^{1p}h^{1q}\nabla_{j}\nabla_{i}h_{pq}\\
									&=-(h^{11})^{2}\nabla_{j}\nabla_{i}h_{11}+2(h^{11})^{2}h^{pp}\nabla_{i}h_{1p}\nabla_{j}h_{1p}.
					\end{aligned}		\nonumber			
				\end{equation}
			
			Now we calculate the first term in (\ref{hij}) as
				\begin{equation}
					\begin{aligned}
						\nabla_{j}\nabla_{i}\Theta
							&=u\nabla_{j}\nabla_{i}K^{\frac{1}{n}}+K^{\frac{1}{n}}\nabla_{j}\nabla_{i}u+\nabla_{j}u\nabla_{i}K^{\frac{1}{n}}+\nabla_{j}K^{\frac{1}{n}}\nabla_{i}u\\
							&=u\nabla_{j}(\dfrac{1}{n}K^{\frac{1}{n}-1}\dot{K}^{pq}\nabla_{i}h_{pq})+K^{\frac{1}{n}}\nabla_{j}\nabla_{i}u+\dfrac{1}{n}K^{\frac{1}{n}-1}\dot{K}^{pq}\nabla_{i}h_{pq}\nabla_{j}u+\dfrac{1}{n}K^{\frac{1}{n}-1}\dot{K}^{pq}\nabla_{j}h_{pq}\nabla_{i}u\\
							&=\dfrac{\Theta}{nK}\dot{K}^{pq}\nabla_{j}\nabla_{i}h_{pq}+\dfrac{1}{n}(\dfrac{1}{n}-1)\dfrac{\Theta}{K^{2}}\dot{K}^{pq}\dot{K}^{rs}\nabla_{j}h_{rs}\nabla_{i}h_{pq}+\dfrac{\Theta}{nK}\ddot{K}^{pq,rs}\nabla_{j}h_{rs}\nabla_{i}h_{pq}\\
							&\qquad+\dfrac{\Theta}{u}\nabla_{j}\nabla_{i}u+\dfrac{\Theta}{nuK}\dot{K}^{pq}\nabla_{i}h_{pq}\nabla_{j}u+\dfrac{\Theta}{nuK}\dot{K}^{pq}\nabla_{j}h_{pq}\nabla_{i}u. \nonumber
					\end{aligned}
				\end{equation}
			Due to Codazzi equation and Ricci identity, we have
				\begin{equation}
					\begin{aligned}
						\dot{K}^{pq}\nabla_{j}\nabla_{i}h_{pq}
								&=\dot{K}^{pq}\nabla_{j}\nabla_{q}h_{ip}=\dot{K}^{pq}(\nabla_{q}\nabla_{j}h_{ip}+h_{lp}R_{liqj}+h_{li}R_{lpqj})\\
								&=\dot{K}^{pq}(\nabla_{q}\nabla_{p}h_{ij}+h_{lp}h_{lq}h_{ij}-h_{lp}h_{lj}h_{iq}+h_{li}h_{lq}h_{pj}-h_{li}h_{lj}h_{pq}\\
								&\qquad\qquad-h_{lp}\delta_{lq}\delta_{ij}+h_{lp}\delta_{lj}\delta_{iq}-h_{li}\delta_{lq}\delta_{pj}+h_{li}\delta_{lj}\delta_{pq})\\
								&=\dot{K}^{pq}\nabla_{q}\nabla_{p}h_{ij}+KHh_{ij}-nKh_{il}h_{jl}-nK\delta_{ij}+\dot{K}^{pp}h_{ij}. \nonumber
					\end{aligned}
				\end{equation}
			It is direct to calculate that
				\begin{equation}
					\ddot{K}^{pq,rs}\nabla_{j}h_{rs}\nabla_{i}h_{pq}
							=\dfrac{\partial \left(Kh^{pq}\right)}{\partial h_{rs}}\nabla_{j}h_{rs}\nabla_{i}h_{pq}
							=\dfrac{\nabla_{j}K\nabla_{i}K}{K}-Kh^{pp}h^{qq}\nabla_{j}h_{pq}\nabla_{i}h_{pq}, \nonumber
				\end{equation}
			where we used $\dot{K}^{pq}=Kh^{pq}$ and $\dfrac{\partial h^{pq}}{\partial h_{rs}}=-h^{pr}h^{qs}$. Hence, at $(x_{0},t_{0})$,
				\begin{equation}
					\begin{aligned}
						\nabla_{j}\nabla_{i}\Theta
							=&\dfrac{\Theta}{nK}\dot{K}^{pq}\nabla_{q}\nabla_{p}h_{ij}-\dfrac{\Theta}{n}h^{pp}h^{qq}\nabla_{i}h_{pq}\nabla_{j}h_{pq}\\
							&+\dfrac{\Theta}{n}Hh_{ij}-\Theta h_{il}h_{jl}-\Theta\delta_{ij}+\dfrac{\Theta}{n}h^{pp}h_{ij}
							+\dfrac{\Theta}{u}\nabla_{j}\nabla_{i}u \\
							&+\dfrac{\Theta}{n^{2}}\nabla_{i}\log K\nabla_{j}\log K+\dfrac{\Theta}{nu}\nabla_{j}\log K\nabla_{i}u+\dfrac{\Theta}{nu}\nabla_{i}\log K\nabla_{j}u.\label{4.7}
					\end{aligned}
				\end{equation}
			Direct computation gives
				\begin{equation}
					\begin{aligned}
						\nabla_{j}\nabla_{i}Q
								&=\dfrac{\nabla_{j}\nabla_{i}v}{v}-\dfrac{\nabla_{j}v\nabla_{i}v}{v^{2}}-A\nabla_{j}\nabla_{i}u+B\nabla_{j}\nabla_{i}\rho \label{4.8} \\
								&=-h^{11}\nabla_{j}\nabla_{i}h_{11}+2h^{11}h^{pp}\nabla_{j}h_{1p}\nabla_{i}h_{1p}-(h^{11})^{2}\nabla_{j}h_{11}\nabla_{i}h_{11}\\
								&\ \ \ \ -A\nabla_{j}\nabla_{i}u+B\nabla_{j}\nabla_{i}\rho.
					\end{aligned}
				\end{equation}
			Recall (\ref{hij}). By (\ref{4.6}) and (\ref{4.7}), at $(x_{0},t_{0})$ we obtain
				\begin{equation}
					\begin{aligned}
						\partial_{t}Q&=\dfrac{\partial_{t}v}{v}-A\partial_{t}u+B\partial_{t}\rho \\
									 &=-h^{11}(\nabla_{1}\nabla_{1}\Theta+\Theta (h_{11})^{2}-\phi'(h_{11})^{2}+uh_{11}-\Theta)-A\partial_{t}u+B\partial_{t}\rho\\
									 &=-\dfrac{\Theta}{nK}\dot{K}^{pq}h^{11}\nabla_{q}\nabla_{p}h_{11}+\dfrac{\Theta}{n}h^{11}h^{pp}h^{qq}\nabla_{1}h_{pq}\nabla_{1}h_{pq}
									 -\dfrac{\Theta}{n}H+\Theta h_{11}  +\Theta h^{11}-\dfrac{\Theta}{n}h^{pp} \\
									 &\quad\quad-\dfrac{\Theta}{u}h^{11}\nabla_{1}\nabla_{1}u 
									 -\dfrac{\Theta}{n^{2}}(\nabla_{1}\log K)^{2}h^{11}-2\dfrac{\Theta}{nu}h^{11}\nabla_{1}\log K\nabla_{1}u-\Theta h_{11}+\phi'h_{11}\\
									 &\quad \quad-u+\Theta h^{11}-A\partial_{t}u+B\partial_{t}\rho \\
									 &\le -\dfrac{\Theta}{nK}\dot{K}^{pq}h^{11}\nabla_{q}\nabla_{p}h_{11}+\dfrac{\Theta}{n}h^{11}h^{pp}h^{qq}\nabla_{1}h_{pq}\nabla_{1}h_{pq}
									  +2\Theta h^{11} -\dfrac{\Theta}{u}h^{11}\nabla_{1}\nabla_{1}u\\
									 &\quad\quad  -\dfrac{\Theta}{n^{2}}(\nabla_{1}\log K)^{2}h^{11}-2\dfrac{\Theta}{nu}h^{11}\nabla_{1}\log K\nabla_{1}u+\phi'h_{11}-A\partial_{t}u+B\partial_{t}\rho. \label{4.9}
					\end{aligned}
				\end{equation}
			Substituting (\ref{4.8}) into (\ref{4.9}) and using Cauchy-Schwarz inequality, we get
				\begin{equation}
					\begin{aligned}
						\partial_{t}Q
								&\le\dfrac{\Theta}{nK}\dot{K}^{ij}\nabla_{j}\nabla_{i}Q-2\dfrac{\Theta}{n}h^{11}h^{ij}h^{pp}\nabla_{j}h_{1p}\nabla_{i}h_{1p}+\dfrac{\Theta}{n}(h^{11})^{2}h^{ij}\nabla_{j}h_{11}\nabla_{i}h_{11}\\
								&\quad\quad+A\dfrac{\Theta}{nK}\dot{K}^{ij}\nabla_{j}\nabla_{i}u
								-B\dfrac{\Theta}{n}h^{ij}\nabla_{j}\nabla_{i}\rho+\dfrac{\Theta}{n}h^{11}h^{pp}h^{qq}\nabla_{1}h_{pq}\nabla_{1}h_{pq}
								+2\Theta h^{11}\\
								&\quad\quad  -\dfrac{\Theta}{u}h^{11}\nabla_{1}\nabla_{1}u
								 +\dfrac{\Theta}{u^{2}}|\nabla_{1}u|^{2}h^{11}+\phi'h_{11}-A\partial_{t}u+B\partial_{t}\rho \\
								&\le -B\dfrac{\Theta}{n}h^{ij}\nabla_{j}\nabla_{i}\rho+2\Theta h^{11}+\dfrac{\Theta}{u^{2}}|\nabla_{1}u|^{2}h^{11}+\phi' h_{11}+B\partial_{t}\rho\\
								&\quad\quad+A\dfrac{\Theta}{nK}\dot{K}^{ij}\nabla_{j}\nabla_{i}u -\dfrac{\Theta}{u}h^{11}\nabla_{1}\nabla_{1}u-A\partial_{t}u.\label{4.10}
					\end{aligned}
				\end{equation}
			Recall (\ref{du}), we have $\nabla_{1}u=\phi\nabla_{1}\rho h_{11}$, hence $|\nabla_{1}u|^{2}\le\phi^{2}h_{11}^{2}$. Meanwhile, noting that $\frac{1}{K^{\frac{1}{n}}}\le h^{11}$, we divide (\ref{4.10}) by $\Theta=uK^{\frac{1}{n}}$ on both sides to obtain
				\begin{equation}
					\dfrac{\partial_{t}Q}{\Theta}\le -\dfrac{B}{n}h^{ij}\nabla_{j}\nabla_{i}\rho+2 h^{11}+\dfrac{\phi^{2}}{u^{2}}h_{11}+\dfrac{\phi'}{u}+B\dfrac{\partial_{t}\rho}{\Theta}+\dfrac{A}{nK}\dot{K}^{ij}\nabla_{j}\nabla_{i}u -\dfrac{\nabla_{1}\nabla_{1}u}{u}h^{11}-A\dfrac{\partial_{t}u}{\Theta}. \label{4.11}
				\end{equation}
			Recall (\ref{du}) again, we have $$\dfrac{\nabla_{1}\nabla_{1}u}{u}h^{11}=\dfrac{h^{11}}{u}\langle V,\nabla h_{11}\rangle+\dfrac{\phi'}{u}-h_{11}.$$
			Since $\nabla Q(x_{0},t_{0})=0$, we also have
				\begin{equation}
					\dfrac{\nabla v}{v}=A\nabla u-B\nabla \rho\quad \Rightarrow \quad h^{11}\nabla h_{11}=-A\nabla u+B\nabla\rho. \nonumber
				\end{equation}
			Hence we have
			\begin{equation}
			\dfrac{\nabla_{1}\nabla_{1}u}{u}h^{11}=-\dfrac{A}{u}\langle V,\nabla u \rangle+\dfrac{B}{u}\langle V,\nabla \rho\rangle+\dfrac{\phi'}{u}-h_{11}.\label{equ-h11}
		    \end{equation}
			Combining (\ref{4.3}) with (\ref{equ-h11}), we obtain
				\begin{equation}
					\begin{aligned}
						&\quad \quad \dfrac{A}{nK}\dot{K}^{ij}\nabla_{j}\nabla_{i}u -\dfrac{\nabla_{1}\nabla_{1}u}{u}h^{11}-A\dfrac{\partial_{t}u}{\Theta}\\
								&=-B\dfrac{\phi}{u}|\nabla\rho|^{2}-\dfrac{\phi'}{u}+h_{11}-A\dfrac{{\phi'}^{2}-\phi^{2}|\nabla\rho|^{2} }{\Theta}+2A\phi'-A\dfrac{u}{n}H\\
								&\le h_{11}-\dfrac{A}{\Theta}+2A\phi',\label{4.12}
					\end{aligned}
				\end{equation}
			where we used ${\phi'}^{2}-\phi^{2}|\nabla\rho|^{2}\ge{\phi'}^{2}-\phi^{2}=1$. 
			Differentiating $\langle V,V\rangle=\phi^{2}$, we obtain
			$$\phi\phi'\partial_{t}\rho=\langle\partial_{t}V,V\rangle=\phi'\langle\partial_{t}X,V\rangle.$$
			By (\ref{flow}), we have
				\begin{equation}
					\partial_{t}\rho=-\dfrac{\Theta}{w}+\dfrac{\phi'}{w}\le\dfrac{\phi'}{w}. \label{4.13}
				\end{equation}
			Substituting (\ref{4.12}) and (\ref{4.13}) into (\ref{4.11}), we get
				\begin{equation}
					\begin{aligned}
						\dfrac{\partial_{t}Q}{\Theta}&\le -\dfrac{B}{n}h^{ij}\nabla_{j}\nabla_{i}\rho+2 h^{11}+\dfrac{\phi^{2}}{u^{2}}h_{11}+B\dfrac{\phi'}{w\Theta}+h_{11}-\dfrac{A}{\Theta}+2A\phi'\\
						&\le -\dfrac{B}{n}h^{ij}\nabla_{j}\nabla_{i}\rho+C_{1}h_{11}+C_{2}h^{11}+AC_{3}
						\label{4.14}
					\end{aligned}
				\end{equation}
			for some $C_{1},C_{2},C_{3}>0$ depending only on $n$, $M_{0}$ and the uniform bounds of $\rho$ and provided $A=BC_{5}\ge2B\sup_{\mathbb{S}^n\times[0,T)}\dfrac{\phi'}{w}$, where $C_{5}$ depends only on the uniform upper and lower bounds of $\rho$.
			
			Finally, we wipe off some nonpositive terms. Recall (\ref{rho}), at $(x_{0},t_{0})$, we have
				\begin{equation}
					\label{equ-drho}
					|\nabla\rho|^{2}=\dfrac{\sum\limits_{i=0}^{n}\langle X_i,V\rangle^{2}}{\phi^{2}}=\dfrac{|V|^{2}}{\phi^{2}}-\dfrac{\langle\nu,V\rangle}{\phi^{2}}=1-\left(\dfrac{u}{\phi}\right)^{2}\le 1-C_{6},
				\end{equation}
			where $C_{6}$ depends on the lower bound of $u$ and the upper bound of $\rho$. Now substituting (\ref{rho}) into (\ref{4.14}) and using (\ref{equ-drho}), we obtain that at $(x_0,t_0)$,
				\begin{equation}
					\begin{aligned}
						0\le\dfrac{\partial_{t}Q}{\Theta}&\le-\dfrac{B}{n}h^{ii}\left(\dfrac{\phi'}{\phi}\left(1-\left(\nabla_{i}\rho\right)^{2}\right)-\dfrac{u}{\phi}h_{ii}\right)+C_{1}h_{11}+C_{2}h^{11}+AC_{3}\\
						&\le -BC_{0}h^{11}+BC_{4}+C_{1}\dfrac{1}{h^{11}}+C_{2}h^{11}+AC_{3} \label{4.16}
					\end{aligned}
				\end{equation}
			holds for some positive constants $C_{0},C_{1},C_{2},C_{3},C_{4}$ depending on the uniform upper and lower bounds of $\rho$ and $n$. We deduce from (\ref{4.16}) that if we choose $B=2\dfrac{C_{1}}{C_{0}}$ and $A=BC_{5}$, $\lambda(x_{0},t_{0})$ cannot be too large, i.e.,  $\lambda(x_{0},t_{0})$ has a uniform upper bound $C(C_{0},C_{1},C_{2},C_{3},C_{4},C_{5})$ independent of time $T_{0}$. 
			Thus, $Q(x_0,t_0)\leqslant C$, which implies that
			$$Q(x,t)\leqslant Q(x_0,t_0)\leqslant C.$$
			Therefore, $\lambda(x,t)\leqslant C$ for all $(x,t)\in M_t$, $t\in[0,T)$. It implies that $\kappa_i\geqslant c_0\ (i=1,2,\cdots,n)$. 
			
			Meanwhile, by Lemma \ref{lemma4.1}, we obtain that $C \ge K \geq c_{0}^{n-1}\kappa_{\max}$
			for some constant $C$ from Lemma \ref{lemma4.1}. Hence the principal curvatures are bounded from above. This completes
			the proof of Lemma \ref{lemma4.2}.
		\end{proof}
	
		By Lemma \ref{lemma4.2}, equation (\ref{equ-rho}) is uniformly parabolic. Combining the $C^0$ estimate and the gradient estimate in Lemmas \ref{lemma3.1} and \ref{lemma3.3}, one obtains the H\"older continuity of $\overline{\nabla}^2\rho$ 
		and $\partial_{t}\rho$ by \cite[Theorem 6]{fully2004}. Estimates for higher derivatives then follow by the
		standard regularity theory for uniformly parabolic equations. Hence we obtain the long
		time existence and regularity of solutions for the flow (\ref{1.1}).
		\begin{theorem}
			Let $M_{0}$ be a smooth, closed, uniformly convex hypersurface in $\mathbb{H}^{n+1}$
			which encloses the origin. Then the	flow \eqref{1.1} has a unique smooth, uniformly convex solution $M_{t}$ for all time $t\in [0,+\infty)$.
			
			Moreover, we have a priori estimates
				\begin{equation*}
					\Vert\rho\Vert_{C^{k,m}_{(\theta,t)}(\mathbb{S}^{n}\times[0,+\infty))}\le C_{k,m},
				\end{equation*}
			where $C_{k,m}>0$ depends only on $k$, $m$, and the geometry of $M_{0}$.
		\end{theorem}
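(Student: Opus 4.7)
The plan is to package the a priori estimates of Sections \ref{section3} and \ref{section4} into a standard continuation argument. Since $M_0$ is smooth and uniformly convex, the scalar PDE \eqref{equ-rho} for the radial function $\rho$ is strictly parabolic at $t=0$: the speed $-\phi K^{1/n}+\phi' w$ depends on $\overline{\nabla}^2\rho$ through the Weingarten matrix \eqref{hi^j}, and on the cone of positive-definite $h_i^j$ the map $h_i^j\mapsto K^{1/n}=(\det h_i^j)^{1/n}$ is smooth, strictly increasing in each argument, and concave. Short-time existence of a unique smooth uniformly convex solution on some maximal interval $[0,T_{\max})$ therefore follows from standard parabolic theory.

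The core of the argument is to rule out $T_{\max}<\infty$. Suppose for contradiction that $T_{\max}$ is finite. Lemma \ref{lemma3.1} gives uniform two-sided bounds on $\rho$ on $\mathbb{S}^n\times[0,T_{\max})$; Lemma \ref{lemma3.3} and Corollary \ref{cor-starshapedness} give a uniform bound on $|\overline{\nabla}\rho|$ and hence on the support function $u$ and the function $w$ in \eqref{w}. Lemma \ref{lemma4.1} yields an upper bound on the Gauss curvature $K$, and Lemma \ref{lemma4.2} improves this to a two-sided bound $1/C\le\kappa_i\le C$ on every principal curvature. Consequently, the eigenvalues of the Weingarten matrix, and therefore of the linearization of \eqref{equ-rho} in $\overline{\nabla}^2\rho$, are uniformly pinched between positive constants depending only on $n$, $M_0$ and the bounds on $\rho$. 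This means \eqref{equ-rho} is uniformly parabolic on $\mathbb{S}^n\times[0,T_{\max})$ and, crucially, concave in $\overline{\nabla}^2\rho$ because $(\det)^{1/n}$ is concave on the positive cone.

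Uniform parabolicity plus concavity is exactly the setting of the Evans--Krylov--Safonov theory for fully nonlinear parabolic equations; invoking \cite[Theorem 6]{fully2004} I obtain a uniform $C^{2,\alpha}_{(\theta,t)}$ estimate on $\rho$, i.e., H\"older continuity of $\overline{\nabla}^2\rho$ and $\partial_t\rho$, with constants independent of $t<T_{\max}$. Once the second derivatives are $C^{\alpha}$, the linearization of \eqref{equ-rho} has $C^{\alpha}$ coefficients, and standard parabolic Schauder theory applied iteratively to differentiated versions of the equation produces uniform $C^{k,m}_{(\theta,t)}$ bounds on $\rho$ for every $k,m$, with constants depending only on $k$, $m$ and the geometry of $M_0$.

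With such estimates in hand, $\rho(\cdot,T_{\max})$ extends to a smooth uniformly convex hypersurface, and short-time existence applied from $T_{\max}$ produces a smooth solution on a slightly longer interval, contradicting the maximality of $T_{\max}$. Hence $T_{\max}=+\infty$, and the same uniform estimates immediately give the stated a priori bound on $[0,+\infty)$. The main technical obstacle is the application of the Evans--Krylov-type Hölder estimate, for which we depend on the uniform positive pinching of the principal curvatures established in Lemma \ref{lemma4.2}; once that input is available, the passage to smooth bounds and the continuation step are routine.
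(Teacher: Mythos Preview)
Your proposal is correct and follows essentially the same approach as the paper: a priori $C^0$, $C^1$ and curvature estimates from Lemmas \ref{lemma3.1}, \ref{lemma3.3}, \ref{lemma4.1}, \ref{lemma4.2} yield uniform parabolicity (and concavity) of \eqref{equ-rho}, after which \cite[Theorem 6]{fully2004} gives the $C^{2,\alpha}$ estimate and standard Schauder bootstrapping handles higher regularity. The paper presents this argument as a brief paragraph preceding the theorem and leaves the short-time existence and continuation step implicit, whereas you have spelled these out explicitly.
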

	
\section{convergence}
\label{section5}
In this section, we will prove the smooth convergence and exponential convergence of the flow (\ref{1.1}).

\subsection{Smooth Convergence} We first consider subsequential convergence.
\begin{lemma}
	There exists a sequence $\{t_j\}$ such that when $t_j\to\infty$, $Q(t_j)$ monotonically decreasing converges to 0, where
	\begin{equation}
		Q(t) = \int_{M_t} \left( \phi'(\rho)-uK^{\frac{1}{n}} \right) \mathrm{d}\mu_t,
	\end{equation}
	and $M_{t_j}$ converges smoothly to a geodesic sphere centered at the origin.
\end{lemma}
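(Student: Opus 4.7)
The plan is to use the volume-monotonicity of the flow and the global $C^\infty$ a priori estimates to extract a subsequence along which the hypersurfaces converge smoothly to a geodesic sphere, and then to pin the limit sphere at the origin through a direct computation of the flow speed on a sphere.

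The Minkowski formula \eqref{equ-minkf} with $k=0$, $n\int \phi'\,d\mu_t=\int uH\,d\mu_t$, allows me to rewrite
\[
Q(t)=\int_{M_t}u\Bigl(\tfrac{H}{n}-K^{1/n}\Bigr)\,d\mu_t,
\]
which is nonnegative by the AM--GM inequality $H/n\geq K^{1/n}$ together with the uniform positivity $u\geq 1/C>0$ from Corollary \ref{cor-starshapedness}. Since $\tfrac{d}{dt}\mathscr{A}_{-1}(\Omega_t)=Q(t)\geq 0$ and $\mathscr{A}_{-1}(\Omega_t)$ is uniformly bounded by Lemma \ref{lemma3.1}, $Q\in L^1([0,\infty))$. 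The $C^{k,m}$ a priori estimates from the end of Section \ref{section4} give a uniform Lipschitz bound on $Q$ in time, so $Q(t)\to 0$ as $t\to\infty$, and I would pick $t_j\to\infty$ with $Q(t_j)\searrow 0$. Arzelà--Ascoli applied to the radial parametrizations $\rho(\cdot,t_j)$ via these $C^{k,m}$-bounds produces, after passing to a subsequence, $M_{t_j}\to M_\infty$ smoothly. Continuity of $Q$ under smooth convergence then yields $Q(M_\infty)=0$, and the equality case of AM--GM forces $\kappa_1=\cdots=\kappa_n$ everywhere on $M_\infty$. Hence $M_\infty$ is totally umbilic and therefore a geodesic sphere $S_{p_\infty,r_\infty}$; the lower bound $u\geq 1/C>0$ further places the origin strictly inside $M_\infty$, so $d:=d(0,p_\infty)<r_\infty$.

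The main obstacle is proving $d=0$. In Fermi coordinates $(\bar r,\theta)$ centered at $p_\infty$ with $\theta$ measured from the geodesic from $p_\infty$ to $0$, the hyperbolic law of cosines gives $\cosh\rho=\cosh d\,\cosh\bar r-\sinh d\,\sinh\bar r\cos\theta$; differentiating in $\bar r$ at $\bar r=r_\infty$ produces $u=\cosh d\,\sinh r_\infty-\sinh d\,\cosh r_\infty\cos\theta$, and a short simplification yields
\[
f\big|_{M_\infty}=\phi'-uK^{1/n}=\frac{\sinh d}{\sinh r_\infty}\cos\theta.
\]
This is a pure first spherical harmonic, matching the normal component of the hyperbolic-translation Killing field pushing $p_\infty$ along the geodesic toward $0$; consequently the forward solution $\tilde M_s$ of \eqref{1.1} starting at $M_\infty$ is an isometric translation of $S_{p_\infty,r_\infty}$, so each $\tilde M_s$ is a sphere of radius $r_\infty$ whose center $p_s$ satisfies $d(0,p_s)<d$ strictly for $s\in(0,\tau]$. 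By continuous dependence of \eqref{1.1} on its initial data, $M_{t_j+s}\to\tilde M_s$ smoothly. On the other hand Lemma \ref{lemma3.1} yields limits $R^\ast:=\lim\max_{\mathbb{S}^n}\rho$ and $r^\ast:=\lim\min_{\mathbb{S}^n}\rho$, and the smooth convergence $M_{t_j+s}\to\tilde M_s$ forces $\max\rho|_{\tilde M_s}=R^\ast$ and $\min\rho|_{\tilde M_s}=r^\ast$ for every $s$. Since $\tilde M_s$ is a sphere enclosing the origin, $R^\ast-r^\ast=2d(0,p_s)$ would then be $s$-independent, contradicting the strict decrease of $d(0,p_s)$ unless $d=0$. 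Hence $M_\infty=S_{0,r_\infty}$ is centered at the origin, as required.
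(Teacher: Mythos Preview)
Your proof is correct and, for the first half, essentially matches the paper: you use the Minkowski formula and AM--GM to see $Q(t)\ge 0$, integrate $\tfrac{d}{dt}\mathscr{A}_{-1}(\Omega_t)=Q(t)$ to get $Q\in L^1$, and extract a smoothly convergent subsequence whose limit is totally umbilic, hence a geodesic sphere. Your extra step $Q(t)\to 0$ via the uniform Lipschitz bound is a harmless strengthening the paper does not bother with.

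The centering argument is where you genuinely diverge. The paper is much more economical: it simply evaluates $\partial_t\rho$ at the spatial maximum of $\rho$ on $M_\infty$, finds $\partial_t\rho=\sinh(\rho_\infty-\rho)/\sinh\rho_\infty<0$ when $\rho>\rho_\infty$, and declares this a contradiction with $M_\infty$ being ``stationary'' (the implicit logic being the same continuous-dependence/monotonicity argument you make explicit). You instead compute the full speed $f|_{M_\infty}=\tfrac{\sinh d}{\sinh r_\infty}\cos\theta$, recognise it as a constant multiple of the normal component of the hyperbolic translation Killing field along the geodesic through $0$ and $p_\infty$, deduce that the forward flow from $M_\infty$ is a rigid translation of spheres with center drifting toward the origin, and close the contradiction via $R^\ast-r^\ast=2d(0,p_s)$. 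This is heavier but also more informative: it identifies the exact dynamics of the flow on off-centre spheres, and it spells out the continuous-dependence step that the paper's phrase ``stationary solution'' elides. The paper's one-line computation at the max point buys brevity; your Killing-field picture buys a clearer mechanism. Both rest on the same monotonicity of $\rho_{\max}$ and $\rho_{\min}$ from Lemma~\ref{lemma3.1}.
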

\begin{proof}
	Note that $\dfrac{\mathrm{d}}{\mathrm{d}t}\mathscr{A}_{-1}(\Omega_t)=Q(t)$, then
	\begin{equation}
		\int_0^t Q(\tau) \mathrm{d}\tau = \mathscr{A}_{-1}(\Omega_t)-\mathscr{A}_{-1}(\Omega).
	\end{equation}
	By $K^{\frac{1}{n}}\leqslant \dfrac{H}{n}$ and Minkowski formulas \eqref{equ-minkf} when $k=0$, we know 
	$$Q(t)\geqslant \dfrac{1}{n}\int_{M_t} \left( n\phi'-uH \right) \mathrm{d}\mu_t = 0.$$ 
	By $C^0$ estimate, $\mathscr{A}_{-1}(\Omega_t)\leqslant |\mathbb{S}^n| \Phi(\max\limits_{\mathbb{S}^n} \rho_0)$. Thus $Q(t)$ is a nonnegative  $L^1(0,\infty)$ function. Then by Arzel\`a-Ascoli theorem and uniform regularity estimates, there exists $\{t_j\}$ such that $M_{t_j}$ converges smoothly to $M_{\infty}$ and the following holds:
	\begin{equation*}
		\int_{M_{\infty}} \left( \phi'(\rho) - u K^{\frac{1}{n}} \right) \mathrm{d}\mu_{\infty} =0.
	\end{equation*}
    Thus, we have
    \begin{equation}
    	\int_{M_{\infty}} u\left( K^{\frac{1}{n}} - \dfrac{H}{n} \right) \mathrm{d}\mu_{\infty}=0.
    \end{equation}
	Since $\frac{1}{C}\leqslant u \leqslant C$ by Corollary \ref{cor-starshapedness}, we have $K^{\frac{1}{n}}=\dfrac{H}{n}$ on $M_{\infty}$. Therefore, $M_{\infty}$ is a geodesic sphere.
	
	We now prove that $M_{\infty}$ is a geodesic sphere centered at the origin. Suppose the radius of $M_{\infty}$ is $\rho_{\infty}$, its principal curvatures are $\kappa_i=\dfrac{\phi'(\rho_{\infty})}{\phi(\rho_{\infty})}\ (i=1,\cdots,n)$. We argue by contradiction. Suppose $M_{\infty}$ is not centered at the origin, then at the point where $\rho$ attains its spatial maximum of $\theta\in\mathbb{S}^n$,
	\begin{equation}
		\dfrac{\partial\rho}{\partial t} = \phi'(\rho)-\phi(\rho)\dfrac{\phi'(\rho_{\infty})}{\phi(\rho_{\infty})} = \dfrac{\phi(\rho_{\infty}-\rho)}{\phi(\rho_{\infty})}<0,
	\end{equation}
	which contradicts that $M_{\infty}$ is the stationary solution to the flow (\ref{1.1}).
\end{proof}

Then we get smooth convergence.
\begin{proposition}
	There exists $\rho_{\infty}>0$ such that along the flow \eqref{1.1}, when $t\to\infty$, there holds $\|\rho(\cdot,t)-\rho_{\infty}\|_{C^k(\mathbb{S}^n)}\to 0$, $\forall\ k\in\mathbb{N}$.
\end{proposition}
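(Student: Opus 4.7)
The plan is to combine the monotonicity of the extremal radii established in Lemma \ref{lemma3.1} with the subsequential convergence proved in the previous lemma, then to upgrade uniform convergence to smooth convergence via the uniform a priori estimates.

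First, I would set $\rho^+(t) := \max_{\mathbb{S}^n}\rho(\cdot,t)$ and $\rho^-(t) := \min_{\mathbb{S}^n}\rho(\cdot,t)$. By Lemma \ref{lemma3.1}, $\rho^+$ is non-increasing and $\rho^-$ is non-decreasing, and both are bounded (by the $C^0$ estimate), so they have limits $\rho^+_\infty \geqslant \rho^-_\infty$ as $t\to +\infty$. Along the subsequence $\{t_j\}$ from the previous lemma, $M_{t_j}$ converges smoothly to the geodesic sphere of some radius $\rho_\infty$ centered at the origin, which means $\rho(\cdot,t_j)\to \rho_\infty$ uniformly on $\mathbb{S}^n$. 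In particular $\rho^+(t_j)\to \rho_\infty$ and $\rho^-(t_j)\to \rho_\infty$, and by monotonicity this forces $\rho^+_\infty = \rho^-_\infty = \rho_\infty$. Consequently the full limit
\begin{equation*}
\lim_{t\to+\infty}\|\rho(\cdot,t)-\rho_\infty\|_{C^0(\mathbb{S}^n)} \;=\; \lim_{t\to+\infty}\bigl(\rho^+(t)-\rho^-(t)\bigr) \;=\;0.
\end{equation*}

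To upgrade this to $C^k$ for every $k$, I would argue by contradiction. Suppose there exist $k\in\mathbb{N}$, $\varepsilon>0$, and a sequence $s_j\to+\infty$ with $\|\rho(\cdot,s_j)-\rho_\infty\|_{C^k(\mathbb{S}^n)}\geqslant \varepsilon$. The uniform bounds $\|\rho\|_{C^{k+1,0}_{(\theta,t)}(\mathbb{S}^n\times[0,+\infty))}\leqslant C_{k+1,0}$ from the long time existence theorem, combined with Arzel\`a--Ascoli, provide a further subsequence along which $\rho(\cdot,s_j)$ converges in $C^k(\mathbb{S}^n)$ to some function $\tilde\rho$. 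By the $C^0$ convergence just established, $\tilde\rho\equiv\rho_\infty$, contradicting $\|\rho(\cdot,s_j)-\rho_\infty\|_{C^k}\geqslant \varepsilon$. Hence $\|\rho(\cdot,t)-\rho_\infty\|_{C^k(\mathbb{S}^n)}\to 0$ for every $k$.

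There is no serious obstacle here: the two essential inputs, monotonicity of the extremal radii and uniform higher-order a priori estimates, have already been put in place, so the proof is a direct combination of them. The only point one needs to be careful about is to identify the limit radius across different subsequences, and that is exactly what the monotonicity of $\rho^{\pm}$ delivers.
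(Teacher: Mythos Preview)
Your argument is correct. The first half—using the monotonicity of $\rho^{\pm}(t)$ together with the subsequential convergence to pin down the full $C^0$ limit—is exactly what the paper does. For the upgrade to $C^k$, however, the paper takes a different route: it applies the Gagliardo--Nirenberg interpolation inequality
\[
\Bigl\|\tfrac{\partial^j}{\partial\theta^j}(\rho-\rho_\infty)\Bigr\|_{L^2(\mathbb{S}^n)}\leqslant \Bigl\|\tfrac{\partial^k}{\partial\theta^k}(\rho-\rho_\infty)\Bigr\|_{L^2(\mathbb{S}^n)}^{j/k}\,\|\rho-\rho_\infty\|_{L^2(\mathbb{S}^n)}^{1-j/k},
\]
bounds the higher-order factor by the uniform $C^k$ estimates, and then invokes Sobolev embedding. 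Your compactness/contradiction argument via Arzel\`a--Ascoli is more elementary and avoids any analytic inequalities; the interpolation approach, on the other hand, is quantitative in the sense that it bounds the $C^k$ distance by a power of the $C^0$ distance, though the paper does not actually exploit this (exponential convergence is proved separately by estimating $|\overline{\nabla}\gamma|^2$). Either method is perfectly adequate here.
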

\begin{proof}
	We have proved that there exist $t_j\to\infty$ and $\rho_{\infty}$ such that $\|\rho(\cdot,t_j)-\rho_{\infty}\|_{C^k(\mathbb{S}^n)}\to 0,\ \forall\ k\in\mathbb{N}$. Since $\rho_{\max}(t)=\max\limits_{\mathbb{S}^n}\rho(\theta,t)$ is non-increasing, $\rho_{\min}(t)=\min\limits_{\mathbb{S}^n}\rho(\theta,t)$ is non-decreasing, we have $\|\rho(\cdot,t)-\rho_{\infty}\|_{C^0(\mathbb{S}^n)}\to 0\ (t\to\infty)$.
	
	Recall the special case of Gagliardo-Nirenberg interpolation inequality (see \cite{Nirenberg1966AnEI}): For any positive integer $j$ and $k$, where $j<k$, and let $\delta=\frac{j}{k}$, there exists a constant $C(j,k)$, such that for any $f\in W^{k,2}(\mathbb{S}^n)$,
	\begin{equation}
		\left\|\dfrac{\partial^j}{\partial\theta^j} f\right\|_{L^2(\mathbb{S}^n)} \leqslant \left\|\dfrac{\partial^k}{\partial\theta^k} f\right\|_{L^2(\mathbb{S}^n)}^{\delta}\|f\|_{L^2(\mathbb{S}^n)}^{1-\delta}.
		\label{equ-gagli}
	\end{equation}
	Choose $f=\rho(\theta,t)-\rho_{\infty}$, we have
	\begin{equation}
		\left\|\dfrac{\partial^j}{\partial\theta^j} \rho(\theta,t)\right\|_{L^2(\mathbb{S}^n)} \leqslant \left\|\dfrac{\partial^k}{\partial\theta^k} \rho(\theta,t)\right\|_{L^2(\mathbb{S}^n)}^{\delta}\|\rho(\theta,t)-\rho_{\infty}\|_{L^2(\mathbb{S}^n)}^{1-\delta}.
	\end{equation}
	For any positive integer $j$, and any $\varepsilon>0$, for a large enough time $t$, by the boundedness of $\|\rho\|_{C^k(\mathbb{S}^n)}$ and $\|\rho(\cdot,t)-\rho_{\infty}\|_{C^0(\mathbb{S}^n)}<\varepsilon$, we know that $\left\| \dfrac{\partial^j}{\partial\theta^j} \rho(\theta,t) \right\|_{L^2(\mathbb{S}^n)}<C\varepsilon$. Then Sobolev embedding theorem implies that $\|\rho(\cdot,t)-\rho_{\infty}\|_{C^{j-1}(\mathbb{S}^n)}<C\varepsilon$. Hence we prove that $\rho(\cdot,t)$ converges smoothly to $\rho_{\infty}$ as $t\to\infty$.
\end{proof}
\subsection{Exponential convergence} We now prove the exponential convergence of the flow (\ref{1.1}).

Let's first derive the evolution equation of $\dfrac{|\overline{\nabla}\gamma|^2}{2}$ along (\ref{equ-gammat}).

\begin{lemma}
	Let $\xi=\dfrac{|\overline{\nabla}\gamma|^2}{2}$, then along the flow \eqref{equ-gammat}, there holds
	\begin{eqnarray}
		\mathcal{L}\xi &=& - \dfrac{1}{n}K^{\frac{1}{n}-1}\dot{K}^{ij}\left[ \dfrac{1}{\phi w^3}\xi^i \xi_j- \dfrac{3}{\phi w^5} \gamma^k \xi_k \gamma^i\xi_j - \dfrac{2\phi'}{\phi w^3} \xi \gamma^i\xi_j -\dfrac{\phi'}{\phi w^3}\gamma^k\xi_k \delta^i_j + \dfrac{1}{\phi w^3} \gamma^k\xi_k\gamma^i_j \right] \notag\\
		&& + \dfrac{\phi'}{\phi\omega} \gamma^k\xi_k -\dfrac{1}{n}K^{\frac{1}{n}-1}\dot{K}^{ij}\left[ \dfrac{1}{\phi w} \gamma^i_k\gamma^k_j - \dfrac{1}{\phi w}\gamma^i\gamma_j + \dfrac{2\phi'}{\phi w}\xi\gamma^i_j \right] - \dfrac{2w\xi}{\phi},
	\end{eqnarray}
    where the operator $\mathcal{L}$ is defined by
    \begin{equation}
    	\mathcal{L}\xi = \partial_t\xi - \dfrac{1}{n\phi w^3}K^{\frac{1}{n}-1}\dot{K}^{ij} (w^2 e^{ik}-\gamma^i\gamma^k)\overline{\nabla}_k\overline{\nabla}_j\xi,
    \end{equation}
    and we denote $\dot{K}^{ij}=\dfrac{\partial K}{\partial h^i_j}$, $\gamma^i_j=e^{ik}\gamma_{kj}$. Here $e_{ij}$ is the standard metric on $\mathbb{S}^n$.
\end{lemma}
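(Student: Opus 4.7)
The plan is a direct computation on $\mathbb{S}^n$. Since $\gamma$ is a scalar and the background metric is time-independent, $\partial_t$ commutes with $\overline{\nabla}$, so
\[ \partial_t \xi = \gamma^j\overline{\nabla}_j(\partial_t\gamma) = \gamma^j\overline{\nabla}_j\bigl(\tfrac{\phi'}{\phi}w\bigr) - \gamma^j\overline{\nabla}_j K^{\frac{1}{n}}. \]
The first piece is elementary: using $(\phi')^2-\phi^2=1$ and $\phi''=\phi$ gives $\overline{\nabla}_j(\phi'/\phi) = -\gamma_j/\phi$, while $w^2 = 1 + 2\xi$ gives $\overline{\nabla}_j w = \xi_j/w$. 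Hence $\gamma^j\overline{\nabla}_j(\phi'w/\phi) = -2w\xi/\phi + (\phi'/(\phi w))\gamma^k\xi_k$, which is exactly the contribution of the two isolated terms at the end of the claim.

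The heart of the proof is the curvature piece. I would rewrite the Weingarten tensor (\ref{hi^j}) in the $\gamma$-parametrization: substituting $\rho_i = \phi\gamma_i$ and the induced $\rho_{ij} = \phi\gamma_{ij} + \phi\phi'\gamma_i\gamma_j$ gives
\[ h^i_j = \tfrac{1}{\phi w}\Bigl(e^{ik}-\tfrac{\gamma^i\gamma^k}{w^2}\Bigr)\bigl(-\gamma_{kj}+\phi'\gamma_k\gamma_j+\phi' e_{kj}\bigr). \]
Via the chain rule $\overline{\nabla}_j K = \dot K^{il}\overline{\nabla}_j h^i_l$, the only third-derivative-of-$\gamma$ contribution to $-\tfrac{1}{n}K^{1/n-1}\gamma^j\overline{\nabla}_j K$ is $\tfrac{1}{n\phi w}K^{1/n-1}\dot K^{il}(e^{ik}-\gamma^i\gamma^k/w^2)\gamma^m\overline{\nabla}_m\gamma_{kl}$. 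Meanwhile $\overline{\nabla}_k\overline{\nabla}_j\xi = \gamma^l_k\gamma_{lj} + \gamma^l\overline{\nabla}_k\overline{\nabla}_j\gamma_l$, and after contraction with $\dot K^{ij}(w^2 e^{ik}-\gamma^i\gamma^k)/w^2$ the two third-derivative pieces coincide modulo the Ricci commutator identity $[\overline{\nabla}_a,\overline{\nabla}_b]\gamma_c = \gamma_a e_{bc} - \gamma_b e_{ac}$ on the constant-curvature sphere. Adding and subtracting the prescribed elliptic operator converts the equation into the form $\mathcal L\xi = \cdots$; the curvature residue from this commutation is exactly the $-\tfrac{1}{\phi w}\gamma^i\gamma_j$ term, and the $\gamma^l_k\gamma_{lj}$ remainder of $\overline{\nabla}_k\overline{\nabla}_j\xi$ becomes the $\gamma^i_k\gamma^k_j$ term.

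The remaining tensor structures arise from chasing every derivative that lands on the prefactors $(\phi w)^{-1}$, $w^{-2}$, $\phi'$ and on the lower-order piece $\phi'(\gamma_k\gamma_j + e_{kj})$ of $h^i_j$. Each such differentiation produces a $\gamma_j/\phi$, $\xi_j/w$, or $\gamma^i_j$ factor; using $\xi_j = \gamma^k\gamma_{kj}$ and $|\overline{\nabla}\gamma|^2 = 2\xi$ the resulting scalars regroup into the seven tensor structures $\xi^i\xi_j$, $\gamma^k\xi_k\gamma^i\xi_j$, $\xi\gamma^i\xi_j$, $\gamma^k\xi_k\delta^i_j$, $\gamma^k\xi_k\gamma^i_j$, $\gamma^i\gamma_j$ and $\xi\gamma^i_j$ listed in the claim, with coefficients dictated by the chain-rule weights. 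The main obstacle is not conceptual but purely clerical: tracking which derivative lands on which factor, symmetrising the Hessian of $\gamma$ consistently, and verifying that every curvature residue produced by the Ricci identity on $\mathbb{S}^n$ is absorbed into the stated tensor basis rather than producing spurious terms. No tools beyond the chain rule, the graph formula (\ref{hi^j}), and the constant-curvature Ricci identity are required.
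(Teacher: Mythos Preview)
Your proposal is correct and follows essentially the same route as the paper: compute $\partial_t\xi=\gamma^k\overline{\nabla}_k(\partial_t\gamma)$, handle the $\tfrac{\phi'}{\phi}w$ piece via $(\phi')^2-\phi\phi''=1$, differentiate the Weingarten tensor in the $\gamma$-parametrization, and use the Ricci identity on $\mathbb{S}^n$ to commute $\gamma^k\overline{\nabla}_k\gamma^i_j$ against $\xi^i_j$. The only organizational difference is that the paper first collapses your product form of $h^i_j$ to the equivalent expression $h^i_j=\tfrac{1}{\phi w}\bigl(\phi'\delta^i_j-\gamma^i_j+\tfrac{1}{w^2}\gamma^i\xi_j\bigr)$ before differentiating, which slightly reduces the number of chain-rule factors to track; otherwise the computations are identical.
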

\begin{proof}
	Recall that
	\begin{equation}
		w=\sqrt{1+|\overline{\nabla}\gamma|^2},\ \ \overline{\nabla} w = \dfrac{\overline{\nabla}\xi}{w}.
	\end{equation}
	By \eqref{equ-gammat}, we have
	\begin{eqnarray}
		\partial_t\left( \dfrac{|\overline{\nabla}\gamma|^2}{2} \right) &=& \overline{\nabla}\gamma\cdot \overline{\nabla}\gamma_t \notag\\
		&=& \overline{\nabla}\gamma \cdot \overline{\nabla} \left( \dfrac{\phi'}{\phi}w-K^{\frac{1}{n}} \right) \notag\\
		&=& \overline{\nabla}\gamma \cdot \left[ \dfrac{w\overline{\nabla} \phi'+\phi'\overline{\nabla}w}{\phi}-\dfrac{\phi'w\overline{\nabla}\phi}{\phi^2} - \overline{\nabla}K^{\frac{1}{n}} \right].
	\end{eqnarray}
    Note that,
    \begin{eqnarray}
    	&&\overline{\nabla}\phi' = \phi''\phi\overline{\nabla}\gamma,\ \ \overline{\nabla}\phi=\phi'\phi\overline{\nabla}\gamma,\notag\\
    	&&\overline{\nabla}K^{\frac{1}{n}} = \dfrac{1}{n}K^{\frac{1}{n}-1}\dot{K}^{ij}\overline{\nabla}h^i_j,
    \end{eqnarray}
    and combining $(\phi')^2-\phi''\phi=1$, we have
    \begin{eqnarray}
    	\partial_t \xi = -\dfrac{2w\xi}{\phi} + \dfrac{\phi'}{\phi w}\overline{\nabla}\gamma \cdot \overline{\nabla}\xi - \dfrac{1}{n}K^{\frac{1}{n}-1}\dot{K}^{ij}\overline{\nabla}\gamma\cdot \overline{\nabla}h^i_j.  \label{equ-xit}
    \end{eqnarray}
    Recall that by (\ref{hi^j}) and (\ref{equ-gammap}),
    \begin{equation}
    	h^i_j = \dfrac{1}{\phi w}\left( \phi' \delta^i_j-\gamma^i_j + \dfrac{1}{w^2}\gamma^i\xi_j \right).
    \end{equation}
    By a direct computation,
    \begin{eqnarray}
    	\overline{\nabla}_k h^i_j &=& \dfrac{1}{\phi w}\left( \phi''\phi\gamma_k\delta^i_j-\overline{\nabla}_k\gamma^i_j+\dfrac{1}{w^2}\gamma^i_k\xi_j + \dfrac{1}{w^2} \gamma^i\xi_{jk}-\dfrac{2}{w^4}\xi_k\gamma^i\xi_j \right)\notag\\
    	&&-\dfrac{w^2\phi'\phi\gamma_k+\phi\xi_k}{\phi^2 w^3}\left( \phi'\delta^i_j - \gamma^i_j + \dfrac{1}{w^2} \gamma^i\xi_j \right).
    \end{eqnarray}
    Using Ricci identity to commute the covariant derivatives on the sphere $\mathbb{S}^n$, we obtain
    \begin{eqnarray}
    	\gamma^k\overline{\nabla}_k\gamma^i_j &=& \gamma^k\left( \overline{\nabla}_j\gamma^i_k + \gamma_j\delta^i_k- \gamma_k\delta^i_j \right)\notag\\
    	&=& \xi^i_j - \gamma^i_k\gamma^k_j + \gamma^i\gamma_j - 2\xi\delta^i_j.
    \end{eqnarray}
    Hence
    \begin{eqnarray}
    	\gamma^k\overline{\nabla}_k h^i_j &=& \dfrac{1}{\phi w^3}\left( \gamma^i\gamma^k - w^2 e^{ik}  \right)\xi_{kj}  + \dfrac{1}{\phi w^3}\xi^i \xi_j- \dfrac{3}{\phi w^5} \gamma^k \xi_k \gamma^i\xi_j - \dfrac{2\phi'}{\phi w^3} \xi \gamma^i\xi_j \notag\\
    	&&-\dfrac{\phi'}{\phi w^3}\gamma^k\xi_k \delta^i_j + \dfrac{1}{\phi w^3} \gamma^k\xi_k\gamma^i_j + \dfrac{1}{\phi w} \gamma^i_k\gamma^k_j - \dfrac{1}{\phi w}\gamma^i\gamma_j + \dfrac{2\phi'}{\phi w}\xi\gamma^i_j. \label{equ-gak}
    \end{eqnarray}
    Substituting (\ref{equ-gak}) into (\ref{equ-xit}), we obtain the result.
\end{proof}

\begin{proposition}
	Let $\gamma(\theta,t),\ (\theta,t)\in\mathbb{S}^n\times[0,+\infty)$ be a smooth solution of initial value problem \eqref{equ-gammat}, and $\rho_{\infty}$ be the radius of limit geodesic sphere. Then for any positive constant $\alpha$ satisfying
	\begin{equation*}
		\alpha<\dfrac{n-1}{n\phi(\rho_{\infty})}
	\end{equation*}
    which depends only on the initial hypersurface $M_0$, we have
    \begin{equation*}
    	|\overline{\nabla}\gamma|^2(\theta,t)\leqslant C\mathrm{e}^{-\alpha t},\ \ \forall\ (\theta,t)\in\mathbb{S}^n\times[0,+\infty),
    \end{equation*}
    where $C$ is a positive constant depending only on $M_0$ and $\alpha$.
\end{proposition}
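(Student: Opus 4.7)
The plan is to apply the parabolic maximum principle to $\xi(\theta,t)$ using the evolution equation derived in the preceding lemma, and to extract exponential decay by passing to the spherical limit guaranteed by the smooth convergence of the previous proposition. Note first that the operator $\mathcal{L}$ is (uniformly, for each fixed $t$) parabolic: its second-order coefficient $\tfrac{K^{1/n-1}}{n\phi w^{3}}\dot K^{ij}(w^{2}e^{ik}-\gamma^{i}\gamma^{k})$ is positive definite because $\dot K^{ij}=K(h^{-1})^{ij}$ is positive on uniformly convex hypersurfaces (Lemma \ref{lemma4.2}), and $(w^{2}e^{ik}-\gamma^{i}\gamma^{k})v_{i}v_{k}\geqslant (w^{2}-|\overline{\nabla}\gamma|^{2})|v|^{2}=|v|^{2}>0$. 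Therefore, at a spatial maximum $(\theta_{0},t_{0})$ of $\xi(\cdot,t_{0})$, the Hessian $\overline{\nabla}^{2}\xi$ is non-positive, and one obtains $\partial_{t}\xi(\theta_{0},t_{0})\leqslant \mathcal{L}\xi(\theta_{0},t_{0})$.

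At this maximum point, $\xi_{k}=0$ for all $k$, and so every term in the first bracket of the evolution equation involving a factor $\xi_{k}$ or $\xi^{i}$ vanishes; what remains is
\begin{equation*}
\mathcal{L}\xi(\theta_{0},t_{0}) = -\frac{K^{1/n-1}}{n\phi w}\,\dot K^{ij}\gamma^{i}_{k}\gamma^{k}_{j} + \frac{K^{1/n-1}}{n\phi w}\,\dot K^{ij}\gamma^{i}\gamma_{j} - \frac{2\phi'\xi\,K^{1/n-1}}{n\phi w}\,\dot K^{ij}\gamma^{i}_{j} - \frac{2w\xi}{\phi}.
\end{equation*}
The first term is non-positive since $\dot K^{ij}$ is positive definite, so it may be discarded. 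For the third term, use the identities $\dot K^{ij}\delta^{i}_{j}=\sigma_{n-1}$ and $\dot K^{ij}h^{i}_{j}=nK$ together with the formula $h^{i}_{j}=\tfrac{1}{\phi w}(\phi'\delta^{i}_{j}-\gamma^{i}_{j})$ (valid at $(\theta_{0},t_{0})$ since $\xi_{j}=0$) to rewrite $\dot K^{ij}\gamma^{i}_{j}=\phi'\sigma_{n-1}-\phi w\,nK$.

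Now invoke the smooth convergence $\rho(\cdot,t)\to \rho_{\infty}$ from the previous proposition: as $t\to\infty$ we have $w\to 1$, $\phi\to\phi(\rho_{\infty})$, $\phi'\to\phi'(\rho_{\infty})$, $h^{i}_{j}\to \tfrac{\phi'(\rho_{\infty})}{\phi(\rho_{\infty})}\delta^{i}_{j}$, and therefore $K^{1/n-1}\dot K^{ij}\to \delta^{ij}$, giving $K^{1/n-1}\dot K^{ij}\gamma^{i}\gamma_{j}\to 2\xi$ and $K^{1/n-1}(\phi'\sigma_{n-1}-\phi w\,nK)\to 0$. Substituting into the simplified evolution equation, for every $\varepsilon>0$ there exists $t_{\varepsilon}$ such that for all $t>t_{\varepsilon}$ and at any spatial maximum,
\begin{equation*}
\mathcal{L}\xi(\theta_{0},t_{0})\;\leqslant\; \frac{2\xi}{n\phi(\rho_{\infty})}-\frac{2\xi}{\phi(\rho_{\infty})}+\varepsilon\xi \;=\;-\frac{2(n-1)}{n\phi(\rho_{\infty})}\,\xi\;+\;\varepsilon\xi.
\end{equation*}

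With this differential inequality in hand, the conclusion follows by Hamilton's trick and ODE comparison. Fix any $\alpha<\tfrac{n-1}{n\phi(\rho_{\infty})}$, and choose $\varepsilon$ so that $\alpha<\tfrac{2(n-1)}{n\phi(\rho_{\infty})}-\varepsilon$. Since $\xi_{\max}(t):=\max_{\mathbb{S}^{n}}\xi(\cdot,t)$ is Lipschitz in $t$, we have for $t>t_{\varepsilon}$
\begin{equation*}
\tfrac{d}{dt}\!\left(e^{\alpha t}\xi_{\max}(t)\right)\;\leqslant\; e^{\alpha t}\!\left(\alpha-\tfrac{2(n-1)}{n\phi(\rho_{\infty})}+\varepsilon\right)\xi_{\max}(t)\;\leqslant\;0,
\end{equation*}
so $\xi_{\max}(t)\leqslant e^{\alpha(t_{\varepsilon}-t)}\xi_{\max}(t_{\varepsilon})$, and absorbing the finite-time interval $[0,t_{\varepsilon}]$ into the constant via the uniform bound on $|\overline{\nabla}\gamma|$ (from the $C^{1}$ estimate) gives $|\overline{\nabla}\gamma|^{2}(\theta,t)=2\xi\leqslant C e^{-\alpha t}$ on $\mathbb{S}^{n}\times[0,\infty)$. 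The main technical point will be the bookkeeping in the third paragraph: verifying the leading-order cancellation between $\tfrac{1}{n\phi w}K^{1/n-1}\dot K^{ij}\gamma^{i}\gamma_{j}$ and $\tfrac{2w\xi}{\phi}$, which produces the crucial negative coefficient, and checking that all the other terms can be absorbed into an arbitrarily small $\varepsilon\xi$ correction once $t$ is large, uniformly using only the $C^{2}$-closeness of $\rho(\cdot,t)$ to $\rho_{\infty}$ supplied by the previous proposition.
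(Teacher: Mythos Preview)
Your proof is correct and follows essentially the same strategy as the paper: apply the maximum principle to $\xi=\tfrac12|\overline\nabla\gamma|^2$ using the evolution lemma, drop the nonpositive $-\dot K^{ij}\gamma^i_k\gamma^k_j$ term, and use smooth convergence to a sphere to show the remaining coefficient of $\xi$ tends to $-\tfrac{2(n-1)}{n\phi(\rho_\infty)}$. The only cosmetic difference is that the paper controls $\dot K^{ij}\gamma^i_j$ by choosing a frame with $\gamma_1=|\overline\nabla\gamma|$ and $\gamma_{1j}=0$ (so $|\gamma_{ii}|\leqslant\varepsilon\phi w$ from the pinching of curvatures), whereas you use the cleaner algebraic identity $\dot K^{ij}\gamma^i_j=\phi'\sigma_{n-1}-n\phi wK$ and observe that $K^{1/n-1}$ times this vanishes in the limit; both routes give the same decay rate.
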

\begin{proof}
	Since $M_t$ converges smoothly to a geodesic sphere as $t\to\infty$, there exists $T_0>0$ such that for any $t>T_0$,
	\begin{equation}
		\max\limits_{1\leqslant i<j\leqslant n}|\kappa_i-\kappa_j|\leqslant\varepsilon \label{equ-ka}
	\end{equation}
	holds on $M_t$ by the regularity estimates of $M_t$. We only need to obtain the estimate of $\xi=\frac{|\overline{\nabla}\gamma|^2}{2}$ for large enough time $t>T_0$. We also assume that 
	$$\xi\leqslant \varepsilon <1 \  \ \ \mbox{and}\ \ \   \dfrac{1}{n}K^{\frac{1}{n}-1}\dot{K}^{ii}\in (\frac{1}{n}-\varepsilon,\frac{1}{n}+\varepsilon)$$
	 hold on $M_t$ for each time $t>T_0$.
	
	For any time $t_0>T_0$, let $\theta_{t_0}\in\mathbb{S}^n$ be the spatial maximum point of $\xi$ at $t=t_0$. Then at $(\theta_{t_0},t_0)$, we have
	\begin{equation}
		\overline{\nabla}\xi = 0,\ \ \left(\overline{\nabla}^2\xi\right)\leqslant 0.
	\end{equation}
    Note that $\dot{K}^{ij}$ and $(w^2e^{ik}-\gamma^i\gamma^k)$ are positive definite, we get that at $(\theta_{t_0},t_0)$,
    \begin{equation}
    	\dfrac{\partial}{\partial t}\xi \leqslant \dfrac{1}{n\phi w}K^{\frac{1}{n}-1}\dot{K}^{ij}\gamma^i\gamma_j-\dfrac{2w\xi}{\phi}-\dfrac{2\phi'}{\phi w}\dfrac{1}{n}K^{\frac{1}{n}-1}\dot{K}^{ij}\gamma^i_j \xi.  \label{equ-ine}
    \end{equation}
    At this point, we choose orthonormal frame such that
    \begin{equation*}
    	\gamma_1 = |\overline{\nabla}\gamma|, \ \ \gamma_j=0,\ \ 2\leqslant j\leqslant n,
    \end{equation*}
    and $(\gamma_{ij})$ is diagonal with $\gamma_{1j}=0$ for all $j=1,2,\cdots, n$. Then the Weingarten matrix
    \begin{equation}
    	h^i_j = \dfrac{1}{\phi w}\left( -\gamma_{ij} + \phi'\delta^i_j \right)
    \end{equation}
    is diagonalized at $(\theta_{t_0},t_0)$, and $\gamma_{11}=0$, $|\gamma_{ii}|=|\gamma_{ii}-\gamma_{11}|\leqslant \varepsilon \phi w$ for $2\leqslant i\leqslant n$ by (\ref{equ-ka}). Now inequality (\ref{equ-ine}) becomes
    \begin{eqnarray}
    	\dfrac{\partial}{\partial t}\xi &\leqslant& \dfrac{2}{n\phi w}K^{\frac{1}{n}-1}\dot{K}^{11}\xi - \dfrac{2w\xi}{\phi} - \dfrac{2\phi'}{\phi w}\left(\dfrac{1}{n}K^{\frac{1}{n}-1}\sum\limits_{i=2}^n\dot{K}^{ii}\gamma_{ii}\right)\xi\notag\\
    	&\leqslant& -\left( \dfrac{n-1}{n\phi}-\tilde{\varepsilon} \right)\xi,
    \end{eqnarray}
    where $\tilde{\varepsilon}\leqslant C\varepsilon$ is a small constant. Let
    \begin{equation*}
        \alpha_0 = \dfrac{n-1}{n\phi(\rho_{\infty})}.
    \end{equation*}
    Then for any $\alpha-\alpha_0$, there exists a time $T_0$ depending on $M_0$ and $\alpha_0-\alpha$ such that for all $t>T_0$, there holds
    \begin{equation*}
    	\dfrac{\partial}{\partial t}\xi\leqslant -\alpha\xi,\ \ t\in[T_0,+\infty)
    \end{equation*}
    at the spatial maximum of $\xi$. Integrating the above inequality, we obtain
    \begin{equation*}
    	\xi(t)\leqslant \xi(T_0)\mathrm{e}^{-\alpha(t-T_0)}
    \end{equation*}
    for all time $t\geqslant T_0$. This completes the proof.
\end{proof}

By Gagliardo-Nirenberg interpolation inequality again, we get the exponential convergence of the flow (\ref{1.1}) in $C^k$ norm ($k\in\mathbb{N}$). Hence, we complete the proof of Theorem \ref{thm1.1}.

\section{Alexandrov-Fenchel inequalities}
\label{section6}
As an application, we will compare quermassintegrals $\mathscr{A}_{n-2}(\Omega)$ and $\mathscr{A}_{-1}(\Omega)$ for a smooth convex domain $\Omega\subset\mathbb{H}^{n+1}\ (n\geqslant 2)$. Now let us prove Theorem \ref{thm-afineq}.

\begin{proof}[Proof of Theorem \ref{thm-afineq}]
	We divide the proof into two steps.
	
	{\bf Step 1.} Assume that $M$ is a smooth uniformly convex hypersurface enclosing a domain $\Omega$. We evolve $M$ along the flow (\ref{1.1}) and obtain a family of uniformly convex hypersurface $M_t$ for all time by Theorem \ref{thm1.1}. As $t\to\infty$, $M_t$ converges smoothly and exponentially to a geodesic sphere $M_{\infty}$ centered at the origin with radius $r_{\infty}$. Let $\Omega_{\infty}$ be the domain enclosed by $M_{\infty}$. On the one hand, along the flow (\ref{1.1}),
	\begin{eqnarray}
		\dfrac{\mathrm{d}}{\mathrm{d}t}\mathscr{A}_{n-2}(\Omega_t) &=& (n-1) \int_{M_t} \left( \phi'(\rho)-uK^{\frac{1}{n}} \right)\sigma_{n-1}(\kappa) \mathrm{d}\mu_t \notag\\
		&\leqslant & (n-1) \int_{M_t} \left( \phi'(\rho)\sigma_{n-1}(\kappa) - nu\sigma_n(\kappa) \right) \mathrm{d}\mu_t = 0, \label{equ-anm2}
	\end{eqnarray}
	where we used $\sigma_{n-1}(\kappa) K^{\frac{1}{n}}\geqslant n\sigma_n(\kappa)$ and Minkowski formulas (\ref{equ-minkf}) with $k=n-1$. Equality holds in \eqref{equ-anm2} if and only if $\Omega_t$ is a geodesic ball. On the other hand,
	\begin{eqnarray}
		\dfrac{\mathrm{d}}{\mathrm{d} t} \mathscr{A}_{-1} (\Omega_t) &=& \int_{M_t} \left( \phi'(\rho)-uK^{\frac{1}{n}} \right) \mathrm{d}\mu_t \notag\\
		&\geqslant&  \dfrac{1}{n} \int_{M_t} (n\phi'(\rho)-uH) \mathrm{d}\mu_t = 0, \label{equ-am1}
	\end{eqnarray}
	where we used $K^{\frac{1}{n}}\leqslant\dfrac{H}{n}$ and Minkowski formulas (\ref{equ-minkf}) with $k=0$. Equality holds in \eqref{equ-am1} if and only if $\Omega_t$ is a geodesic ball. Thus,
	\begin{equation}
		\mathscr{A}_{n-2}(\Omega) \geqslant \mathscr{A}_{n-2}(\Omega_{\infty}) = \xi_{n-2}(r_{\infty}),
	\end{equation}
	and
	\begin{equation}
		\mathscr{A}_{-1}(\Omega) \leqslant \mathscr{A}_{-1}(\Omega_{\infty}) = \xi_{-1}(r_{\infty}).
	\end{equation}
	Since $\xi_{-1}$ is strictly increasing, its inverse function $\xi_{-1}^{-1}$ is well-defined. Let $\xi_{n-2,-1}=\xi_{n-2}\circ \xi_{-1}^{-1}$, then inequality (\ref{equ-afine}) for uniformly convex $M$ holds. Equality in (\ref{equ-afine}) holds if and only if
	$$\dfrac{\mathrm{d}}{\mathrm{d} t}\mathscr{A}_{n-2}(\Omega_t)=\dfrac{\mathrm{d}}{\mathrm{d}t}\mathscr{A}_{-1}(\Omega_t)=0$$
	holds for all time, which is equivalent to that $\Omega$ is a geodesic ball.
	
	{\bf Step 2.} Let $M$ be a smooth convex hypersurface enclosing the domain $\Omega$, then there exists a sequence of uniformly convex hypersurfaces $M_{\varepsilon}$ such that $M_{\varepsilon}\to M$ as $\varepsilon\to 0$. In fact, we can project $\Omega$ into $B_1(O)\subset\mathbb{R}^{n+1}$ by Klein model (see \cite[Section 5]{Andrews2018QuermassintegralPC} and \cite[Section 3]{WYZ2023}).
	
	For convenience, we review the Klein model here. Denote $\mathbb{R}^{1,n+1}$ be the Minkowski space time, which is the vector space $\mathbb{R}^{n+2}$ equipped with Minkowski spacetime metric $\langle\cdot,\cdot \rangle$: For any vector $X=(X_0,X_1,X_2,\cdots,X_{n+1})\in\mathbb{R}^{n+2}$,
	$$\langle X,X \rangle=-X_0^2+\sum\limits_{i=1}^{n+1}X_i^2.$$ 
	The hyperbolic space $\mathbb{H}^{n+1}$ is then viewed as a hypersurface in $\mathbb{R}^{1,n+1}$,
	\begin{equation}
		\mathbb{H}^{n+1} = \left\{ X\in\mathbb{R}^{1,n+1}:\langle X,X \rangle=-1,\ X_0>0 \right\}.
	\end{equation}
	The Klein model parametrizes the hyperbolic space by using the unit disc, which induces a projection from an embedding $X:M^n \to\mathbb{H}^{n+1}$ to an embedding $Y:M^n\to B_1(O)\subset \mathbb{R}^{n+1}$ by
	\begin{equation}
		X=\dfrac{(1,Y)}{\sqrt{1-|Y|^2}}.
	\end{equation}
	Let $\nu\in T\mathbb{H}^{n+1}$, $h_{ij}^X$ and $N\in\mathbb{R}^{n+1}$, $h_{ij}^{Y}$ denote the unit normal vector, the second fundamental form of $X(M)\subset\mathbb{H}^{n+1}$ and $Y(M)\subset\mathbb{R}^{n+1}$ respectively. We have the relation:
	\begin{equation}
		h_{ij}^{X} = \dfrac{h_{ij}^{Y}}{\sqrt{(1-|Y|^2)(1-\langle N,Y \rangle^2)}}. \label{equ-second}
	\end{equation}
	
	By (\ref{equ-second}) we know the image $\hat{\Omega}$ under projection is also convex. By mean curvature flow, there exists a sequence of uniformly convex domains $\hat{\Omega}_{\varepsilon}$ that approximate $\hat{\Omega}$. Since the projection is a diffeomorphism, we find a family of uniformly convex domains $\Omega_{\varepsilon}\subset\mathbb{H}^{n+1}$ that approximate $\Omega$. Inequality (\ref{equ-afine}) for convex $M$ follows by letting $\varepsilon\to 0$ in the inequality (\ref{equ-afine}) for the uniformly convex $M_{\varepsilon}$.
	
	Now assume $M$ is convex and equality in (\ref{equ-afine}) holds. Let
	\begin{equation}
		M_{+} = \{p\in M: \kappa_i(p)>0,\ 1\leqslant i\leqslant n\}.
	\end{equation}
	Since a compact embedded hypersurface in $\mathbb{H}^{n+1}$ contains at least one elliptic point, $M_+$ is a nonempty open set. We claim that $M_+$ is also closed. Following \cite{Guan2009TheQI}, we consider a normal variation of $M$
	\begin{equation}
		\partial_t X = -\varphi\nu,
	\end{equation} 
	where $\varphi\in C_c^{\infty}(M_+)$ is a smooth function with compact support in $M_+$. For sufficiently small time $t\in(-\varepsilon,\varepsilon)$, $M_t$ is convex. Set
	\begin{equation}
		Q(t) = \mathscr{A}_{n-2}(\Omega_t) - \xi_{n-2,-1}\left( \mathscr{A}_{-1}(\Omega_t) \right),
	\end{equation}
	then
	\begin{equation}
		Q(0)=0,\ \ Q(t)\geqslant 0,\ \forall\ t\in(-\varepsilon,\varepsilon),
	\end{equation}
	which implies that $Q(t)$ attains its minimum at $t=0$. Thus,
	\begin{eqnarray}
		0 &=& \left.\dfrac{\mathrm{d}}{\mathrm{d}t}\right|_{t=0}\left[ \mathscr{A}_{n-2}(\Omega_t) - \xi_{n-2,-1}\left( \mathscr{A}_{-1}(\Omega_t) \right) \right] \notag\\
		&=& (n-1)\int_M \sigma_{n-1}(\kappa) (-\varphi) \mathrm{d}\mu -\xi_{n-2,-1}'\left(  \mathscr{A}_{-1}(\Omega)\right) \int_{M} (-\varphi) \mathrm{d}\mu \notag\\
		&=& - \int_M \left[ (n-1)\sigma_{n-1}(\kappa) -\xi_{n-2,-1}'\left( \mathscr{A}_{-1}(\Omega) \right) \right] \varphi\mathrm{d}\mu,\ \ \forall\ \varphi\in C_c^{\infty}(M_+).
	\end{eqnarray}
	In particular,
	\begin{equation}
		M_+ = \left\{p\in M:(n-1)\sigma_{n-1}(\kappa)=\xi_{n-2,-1}'\left( \mathscr{A}_{-1}(\Omega) \right)\right\}.
	\end{equation}
	Therefore, $M_+$ is closed in $M$ and the connectedness of $M$ implies $M_+=M$. We conclude that $M$ is uniformly convex. By the equality case in {\bf Step 1}, we obtain that $\Omega$ is a geodesic ball. Hence we complete the proof of Theorem \ref{thm-afineq}.
\end{proof}

\bibliographystyle{plainnat}
\bibliography{reference.bib}

\end{document}